\documentclass[11pt]{article}
\usepackage{a4wide}  
\usepackage{amsmath,amssymb,bbm}
\usepackage{mathrsfs}
\usepackage{mathenv}
\usepackage{fancybox}
\usepackage{version}
\usepackage[backend=bibtex,style=numeric,maxbibnames=99]{biblatex}
\usepackage{authblk}
\usepackage{empheq}

\addbibresource{prop&uni}

\usepackage{blindtext}
\usepackage{multicol}
\usepackage{color}
\usepackage{comment}
\usepackage{wrapfig}
\usepackage{graphicx}

\usepackage{hyperref}
\hypersetup{colorlinks=true, linkcolor=blue, filecolor=magenta, urlcolor=blue}
\usepackage{amsthm,cleveref}
\urlstyle{same}

\providecommand{\keywords}[1]{\textbf{\textit{Keywords---}} #1}

\setlength{\columnsep}{1cm}
\setlength{\columnseprule}{3pt} %Separator ruler width
 %Separator ruler colour

\setlength{\topmargin}{-10mm}
\setlength{\textheight}{20cm}

\newcommand{\R}{\mathbb{R}}
\newcommand{\NN}{\mathbb{N}}
\newcommand{\Z}{\mathbb{Z}}

\newcommand{\norme}[1]{\left\Vert #1\right\Vert}
\newcommand{\abs}[1]{\left\lvert #1\right\rvert}
\newcommand{\intervalleoo}[2]{\mathopen{]}#1\, ,#2\mathclose{[}}
\newcommand{\intervalleff}[2]{\mathopen{[}#1\, ,#2\mathclose{]}}
\newcommand{\intervalleof}[2]{\mathopen{]}#1\, ,#2\mathclose{]}}
\newcommand{\intervallefo}[2]{\mathopen{[}#1\, ,#2\mathclose{[}}

\newtheorem{theorem}{THEOREM}[section]

\newtheorem{lemma}[theorem]{LEMMA}

\newtheorem{remark}[theorem]{REMARK}

\newtheorem{proposition}[theorem]{PROPOSITION}

%\title{Propagation of velocity moments for the 3-dimensional Vlasov-Poisson system with a non-constant magnetic field}
%\author{Alexandre Rege}

%\date{12th May 2021}
\begin{document}

%!TeX spellcheck = en_US	

\title{Propagation of velocity moments and uniqueness for the magnetized Vlasov--Poisson system}

\author{Alexandre Rege\footnote{ETH Z\"urich, Department of Mathematics, R\"amistrasse 101, 8092 Z\"urich Switzerland.\\
Email: \href{mailto:alexandre.rege@math.ethz.ch}{alexandre.rege@math.ethz.ch}}}

\maketitle

\begin{abstract}
We present two results regarding the three-dimensional Vlasov--Poisson
system in the full space with an external magnetic field.  First, we investigate the propagation of velocity moments for solutions to the system when the magnetic field is uniform and time-dependent. We combine the classical moment approach with an induction procedure depending on the cyclotron period $T_c=\norme{B}_{\infty}^{-1}$. This allows us to obtain, like in the unmagnetized case, the propagation of velocity moments of order $k>2$ in the full space case and of order $k>3$ in the periodic case. Second, this time taking a general magnetic field that depends on both time and position, we manage to extend a result by Miot \cite{M16} regarding uniqueness for Vlasov--Poisson to the magnetized framework.%, where the main difficulty is that we lose the second order structure of the Cauchy problem for the characteristics.
\end{abstract}
\keywords{Vlasov--Poisson, time-dependent non-uniform magnetic field, propagation of moments, uniqueness}

\section{Introduction}
We begin by stating that the results in this paper were announced in the author's PhD thesis \cite{rege21}. 

We study the Cauchy problem for the three-dimensional Vlasov--Poisson system with an external magnetic field. This system is usually called the magnetized Vlasov--Poisson system, and is given by the following set of equations:

\begin{equation}\label{sys:VPwB}\tag{VPB}
\left\{
\begin{aligned}
& \partial_t f + v\cdot \nabla_x f + \left(E+v \wedge B\right) \cdot \nabla_{v}	 f= 0, \\
& f(0,x,v)=f^{in}(x,v)\geq 0.
\end{aligned}
\right.
\end{equation}
where $f^{in}$ is a positive measurable function and $f:=f(t,x,v)$ is the distribution function of particles at time $t \in \R_+$, position $x\in \R^3$ and velocity $v \in \R^3$.  The self-consistent electric field $E:=E(t,x)$ is given by:
\begin{equation}\label{def:E}
 E=-\nabla_x \mathcal{G}_3 \ast \rho,
\end{equation}
with $\mathcal{G}_3=\frac{1}{4\pi \abs{x}}$ the Green function for the Laplacian and $\rho(t,x):=\int_{\R^3} f(t,x,v)dv$ the charge particle density. The external magnetic field $B:=B(t,x)$ will be locally bounded in time and Lipschitz in position. As mentioned above we will assume that $B$ is uniform when we study the propagation of velocity of moments for solutions to \eqref{sys:VPwB}.
This system models the evolution of a set of charged particles subject to an external magnetic field $B$ that interact through the Coulomb force, and thus it is relevant for the study of various physical systems, most notably plasmas.

The mathematical theory for the unmagnetized Vlasov--Poisson system has been studied and developed in a great number of different works. In the three-dimensional framework, Arsenev \cite{AR75} was the first to prove the existence of global weak solutions through a regularization procedure that preserves the main a priori estimates. The existence of global classical solutions for general initial data was established at the beginning of the nineties in two separate works by Pfaffelmoser \cite{P92} and Lions, Perthame \cite{LP91}. The first approach, extended and developed in \cite{CC19,H93,S91}, relies on a very sharp study of the characteristics of the Vlasov--Poisson system while the latter approach, extended and developed in \cite{C99,GJP00,P14,P96,S09}, is based on estimating velocity moments of weak solutions in a very fine way by using the a priori bounds verified by the system. Even if these two approaches differ greatly, in both cases a key condition is to limit the influence of high velocities on the dynamics, by either considering initial data with compact support \cite{P92} or having a finite velocity moment of sufficiently high order ($k>3$) \cite{LP91}. More recently, Pallard combined the two approaches in \cite{P12}, where he showed how to exploit the first approach by Pfaffelmoser to prove propagation of velocity moments, extending the main result of \cite{LP91} by showing that this propagation property is also true for moments of order $2< k \leq 3$. For more information on well-posedness for Vlasov type systems, we refer the reader to the recent review \cite{GI21three}.

Going back to the magnetized Vlasov--Poisson system \eqref{sys:VPwB}, existence of weak solutions to this system can be seen as a corollary of the work by DiPerna and Lions \cite{DLvlasovmax} on the existence of renormalized solutions for Vlasov--Maxwell. The system was then studied by Golse and Saint-Raymond in the strong magnetic field limit \cite{GS99,GS03}. More recently, in a linear setting Bedrossian and Wang \cite{BW20} and Charles, Despr\'es, Weder and the author \cite{CDRW21} proved in two different ways that the external magnetic field in \eqref{sys:VPwB} destroys Landau damping. In another recent paper \cite{R21}, the author showed propagation of velocity moments for \eqref{sys:VPwB} with constant magnetic field $B=(0,0,\omega)$ (with $\omega>0$ the cyclotron frequency) by extending the method of propagation of velocity moments from \cite{LP91}. In order to extend the moment method, an important point in \cite{R21} was to establish a representation formula for the charge density $\rho$. This was carried out by explicitly computing the characteristics of the transport equation
\begin{equation}\label{eq:Vlasovlin}
\partial_t f+v\cdot \nabla_x f+ v\wedge B \cdot \nabla_v f=0
\end{equation} 
and then using the Duhamel formula, which just meant considering the Vlasov equation as the transport equation \eqref{eq:Vlasovlin} with a source given by the nonlinear term $-E\cdot \nabla_v f$. In this analysis, singularities at times $t=0,2\pi/\omega,4\pi/\omega,...$, which are just multiples of the cyclotron period $T_\omega=2\pi/\omega$, appeared in the velocity moment estimates because of the added magnetic field. This was remedied by the fact that all the estimates depended only on quantities conserved for all time and on the initial velocity moment, allowing for an induction argument to prove propagation of moments for all time. Unfortunately, in our configuration with a general magnetic field, this analysis breaks down at the first hurdle, simply because we can't explicitly compute the characteristics of \eqref{eq:Vlasovlin}, even with a smooth $B$.

The first main result of this paper, which is the continuation of \cite{R21}, is to generalize Pallard's method \cite{P12} to the magnetized Vlasov--Poisson system in the case of a uniform time-dependent magnetic field. We manage to obtain this result by combining Pallard's method with an induction argument using the cyclotron period similar to the one in \cite{R21}. However, in this paper we don't obtain explicit singularities like in \cite{R21} because, as said above, we don't compute the characteristics of the system explicitly but instead we write estimates on the evolution of the characteristics like in \cite{P12}. Indeed, with the added magnetic field, we observe that we need to work on a small time scale compared to the cyclotron period $T_c=\norme{B}_{L^\infty(\intervalleff{0}{T})}^{-1}$ to obtain estimates analogous to those in \cite{P12}. Very broadly speaking, this is due to the fact that on time scales comparable to $T_c$ or greater than $T_c$, the variation of the velocity characteristic is large and so the method in \cite{P12} fails without this assumption. This justifies the use of the induction argument and allows us to obtain the same optimal results for \eqref{sys:VPwB} with $B:=B(t)$ as in the unmagnetized case. More precisely, we obtain the propagation of velocity moments of order $k>2$ in the full space case and of order $k>3$ in the periodic case.

Let us also recall that propagation of velocity moments also implies propagation of the regularity of the initial data which means we have existence of classical solutions to \eqref{sys:VPwB}. This result is detailed in \cite[theorem 2.5]{R21} for a constant magnetic field under additional conditions on $f^{in}$ ($f^{in}$ decays faster in velocity) but can be easily extended to the case of a general magnetic field $B:=B(t,x)$.

Now we turn to results regarding uniqueness, where this time we will work with a general non-uniform and time-dependent magnetic field. We first mention the result by Robert \cite{Rob97} where uniqueness for Vlasov--Poisson was shown when the initial data is compactly supported.  Then, using tools from optimal transport, Loeper made a major contribution \cite{L06} by proving that the set of solutions to the Vlasov--Poisson system with bounded microscopic density was a uniqueness class. This result was also extended to \eqref{sys:VPwB} for a constant $B$ in \cite{R21} and we discuss how to prove a similar result for a general $B$ below. Loeper's result was also generalized to less singular kernels in \cite{H09}, and very recently in \cite{I22} a new class of Wasserstein distances was introduced which improved Loeper's estimates. In \cite{M16}, Miot used some specific properties of the Vlasov--Poisson system to show uniqueness under the condition that the $L^p$ norms of the charge density grow at most linearly with respect to $p$, generalizing Loeper's uniqueness condition. This allows for solutions with unbounded charge density, more precisely with logarithmic blow-up. This result was extended to functions with charge density in Orlicz spaces in \cite{HM18}. Lastly, we underline that all these results were established in the full space setting.

This paper's second main result is to prove that Miot's uniqueness condition from \cite{M16} is also valid for \eqref{sys:VPwB} with added assumptions on the velocity moments of the initial data. In \cite{M16}, a key point was exploiting the second-order structure of the characteristics of the Vlasov--Poisson system. This explains why the uniqueness condition from \cite{M16} doesn't apply to the two-dimensional Euler model for incompressible fluids, which presents many similarities with Vlasov--Poisson, because the characteristics of the Euler model only verify a first-order ODE, whereas Loeper's condition from \cite{L06} works for both models. In our case, the main difficulty is that the added $B:=B(t,x)$ breaks the second-order structure of the Cauchy problem for the characteristics. We manage to get around this by proving that the characteristics in the magnetized case can be controlled by assuming Lipschitz regularity on $B$ in position and with the additional assumptions on the moments of the initial data mentioned above. With these additional assumptions, we deduce a new uniqueness condition which is actually the same as the sufficient condition imposed on the initial data to verify the uniqueness criterion in \cite[theorem 1.2]{M16}.

Finally, to conclude this introduction we present some interesting open problems. Naturally, we first mention the propagation of velocity moments for the magnetized Vlasov--Poisson system \eqref{sys:VPwB} with an external magnetic field that also depends on position $B:=B(t,x)$. In the proof of our main result, we explain why the approach used in this paper fails when $B:=B(t,x)$. Lastly, following the recent progress made regarding the well-posedness for the  Vlasov--Poisson system that describes the evolution of ions instead of electrons \cite{GI21two,GI21}, we could explore if the methods developed in our paper can also be applied to this ionic Vlasov--Poisson system.

\textbf{Outline of the paper:} This paper is organized as follows. We conclude this section by giving some notations and the classical a priori estimates satisfied by \eqref{sys:VPwB}. In \cref{sec:results} the main results of the paper will be presented. Then \cref{sec:prop} will be devoted to the proof of propagation of velocity moments to solutions of \eqref{sys:VPwB} in both the full space case and the periodic case using the induction argument presented above. We finish with \cref{sec:uni} where we detail our proof of uniqueness for solutions to \eqref{sys:VPwB}, highlighting how additional assumptions on the moments of the initial data allow us to control the added terms due to the external magnetic field.
\subsection{Preliminaries}
First we present the standard notation for velocity moments, for any $k\geq 0$ and $t \geq 0$ we define:
\begin{equation}\label{def:moment}
M_k(t) = \underset{0\leq s \leq t}{\sup} \iint \abs{v}^k f(s,x,v) dvdx.
\end{equation}

Now we detail the two main a priori bounds that we can deduce from \eqref{sys:VPwB}. The first bound is a direct consequence of the Vlasov equation where the coefficients are divergence-free, we have

\begin{equation}\label{eq:conser_norme}
\norme{f(t)}_p = \norme{f^{in}}_p
\end{equation} 
for all time $t$ and exponents $p \in \intervalleff{0}{+\infty}$. 

The second bound is the conservation of the energy $\mathcal{E}(t)$ of the system, with
\begin{equation}\label{def:energie}
\mathcal{E}(t):=\frac{1}{2}\iint_{\R^3\times \R^3}\abs{v}^2f(t,x,v)dxdv+\frac{1}{2}\int_{\R^3}\abs{E(t,x)}^2dx = \mathcal{E}(0) < +\infty.
\end{equation}
%Since throughout this paper we will consider solutions $f$ such that $\iint \abs{v}^k f <+\infty$ with $k>2$, \eqref{def:energie} will always be verified.
Furthermore, thanks to the conservation of the energy $\mathcal{E}(t)$, we have the following bounds.
\begin{lemma}
	For all $t\geq 0$, we have $M_2(t) \leq C_1$ and $\norme{\rho(t)}_\frac{5}{3} \leq C_2$ with the constants $C_1, C_2$ depending only on $\mathcal{E}(0),\norme{f^{in}}_1,\norme{f^{in}}_\infty$.
\end{lemma}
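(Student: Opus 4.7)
The plan is to derive the two bounds as direct consequences of the conservation of mass, $L^\infty$ norm, and energy.

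For the first bound, I would simply observe that the electric part of the energy is non-negative, so \eqref{def:energie} gives
\[
\frac{1}{2}\iint_{\R^3\times\R^3}\abs{v}^2 f(s,x,v)\,dx\,dv \leq \mathcal{E}(s)=\mathcal{E}(0)
\]
for every $s\in\intervalleff{0}{t}$. Taking the supremum yields $M_2(t)\leq 2\,\mathcal{E}(0)=:C_1$, which depends only on $\mathcal{E}(0)$.

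For the second bound, I would use the classical velocity-truncation interpolation. Given $R>0$ and $(t,x)$, split
\[
\rho(t,x)=\int_{\abs{v}\leq R}f(t,x,v)\,dv+\int_{\abs{v}>R}f(t,x,v)\,dv \leq \frac{4\pi}{3}\norme{f(t)}_\infty R^3+\frac{1}{R^2}\int_{\R^3}\abs{v}^2 f(t,x,v)\,dv.
\]
By \eqref{eq:conser_norme} we may replace $\norme{f(t)}_\infty$ by $\norme{f^{in}}_\infty$. Optimizing in $R$ (balancing the two terms by choosing $R^5$ proportional to $(\int\abs{v}^2 f\,dv)/\norme{f^{in}}_\infty$) yields the pointwise estimate
\[
\rho(t,x)\leq C\,\norme{f^{in}}_\infty^{2/5}\Bigl(\int_{\R^3}\abs{v}^2 f(t,x,v)\,dv\Bigr)^{3/5}.
\]

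Raising this to the power $5/3$ and integrating in $x$ removes the velocity integral, giving
\[
\norme{\rho(t)}_{5/3}^{5/3}\leq C\,\norme{f^{in}}_\infty^{2/3}\iint_{\R^3\times\R^3}\abs{v}^2 f(t,x,v)\,dx\,dv\leq C\,\norme{f^{in}}_\infty^{2/3}\,M_2(t),
\]
so combining with the bound on $M_2(t)$ produces the desired constant $C_2$ depending only on $\mathcal{E}(0)$ and $\norme{f^{in}}_\infty$. There is no real obstacle here; the only mildly delicate point is keeping track of the exponents in the optimization, and the appearance of $\norme{f^{in}}_1$ in the dependency list is essentially cosmetic (it would be needed if one wanted to bound $\mathcal{E}(0)$ from $f^{in}$ data alone via Hardy--Littlewood--Sobolev, but here $\mathcal{E}(0)$ is already assumed finite).
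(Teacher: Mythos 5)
Your proof is correct and is exactly the standard argument the paper implicitly invokes: the kinetic-energy bound from \eqref{def:energie} gives $M_2(t)\le 2\mathcal{E}(0)$, and the velocity-truncation interpolation with the conserved $L^\infty$ norm gives the $L^{5/3}$ bound on $\rho$. Your side remark that $\norme{f^{in}}_1$ does not actually enter these two estimates is also accurate.
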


As said before, we will use the Lagrangian formulation detailed in \cite{P92}, so we define the characteristics $(X,V)$ of \eqref{sys:VPwB} which are solutions to the following Cauchy problem:

\begin{equation}\label{def:chara}
\left\{
\begin{aligned}
& \frac{d}{ds}X(s;t,x,v)=V(s;t,x,v),\\
& \frac{d}{ds}V(s;t,x,v)=E(s,X(s;t,x,v))+V(s;t,x,v) \wedge B(s),
\end{aligned}
\right.
\end{equation}
with 
\begin{equation}
\left(X(t;t,x,v),V(t;t,x,v)\right)=(x,v).
\end{equation}

Then like in \cite{P12}, we define for any $t>0$ and $\delta \in \intervalleoo{0}{t}$.

\begin{equation}\label{def:Q}
Q(t,\delta):= \sup \left\{ \int_{t-\delta}^{t} \abs{E(s,X(s;0,x,v))}ds, (x,v) \in \R^3 \times \R^3 \right\}.
\end{equation}
For the unmagnetized Vlasov--Poisson system, $Q(t,\delta)$ quantifies the evolution of the characteristics on the interval $\intervalleff{t-\delta}{t}$. However, in our context with the added magnetic field, the evolution of the velocity characteristic will be quantified both by $Q(t,\delta)$ and $\norme{B}_\infty$.

\section{Results}\label{sec:results}
%As said above, we begin by stating the results on propagation of velocity moments for solutions to \eqref{sys:VPwB} in the full space $\R^3 \times \R^3$, then we will present analogous results in the periodic framework and finally the results regarding uniqueness will be exposed.
\subsection{Propagation of moments}\label{subsec:Bt}
We now give the first main result of this article, which is the propagation of velocity moments of order $k>2$, extending theorem 1 in \cite{P12} to the magnetized Vlasov--Poisson system.

We specify that throughout this subsection which deals with propagation of velocity moments and in \cref{sec:prop}, to lighten the computations we set $\norme{B}_{\infty}:=\norme{B}_{L^\infty(\intervalleff{0}{T})}$.

\begin{theorem}\label{theo:main}
	Let $k_0>2, T>0, f^{in}=f^{in}(x,v)\geq 0$ a.e. with $f^{in}\in L^1\cap L^\infty (\R^3\times \R^3)$ and assume that
	\begin{equation}\label{ineq:momentini}
	\iint_{\R^3\times \R^3}\abs{v}^{k_0}f^{in}dxdv < \infty.
	\end{equation}
	Furthermore let $B:=B(t)$ verify
	 \begin{equation}\label{reguBt}
	 B \in L^\infty(\intervalleff{0}{T}).
	 \end{equation}
	Then there exists a weak solution
	\begin{equation}
	f \in C(\R_+;L^p(\R^3 \times \R^3)) \cap L^\infty(\R_+;L^p(\R^3 \times \R^3))
	\end{equation}
	$(1\leq p < +\infty)$ to the Cauchy problem for the Vlasov--Poisson system with magnetic field \eqref{sys:VPwB} in $\R^3 \times \R^3$ such that
	%\begin{equation}\label{ineq:mainQ}
	%N(T):=\underset{0\leq t \leq T}{\sup} Q(t,t) \leq C_1,
	%\end{equation}
	%and
	\begin{equation}\label{ineq:moment2}
	\underset{0\leq t \leq T}{\sup} \iint_{\R^3 \times \R^3} \abs{v}^{k_0} f(t,x,v) dvdx \leq C
	\end{equation}
	with $C$ that depends only on
	\begin{equation}\label{constants}
	T, k_0, \norme{B}_{\infty}, \mathcal{E}(0), \norme{f^{in}}_1, \norme{f^{in}}_\infty, \iint_{\R^3\times \R^3}\abs{v}^{k_0}f^{in}dxdv.
	\end{equation}
\end{theorem}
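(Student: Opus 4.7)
The strategy is to combine Pallard's Lagrangian moment method \cite{P12} with a finite induction over time subintervals of length comparable to the cyclotron period $T_c=\norme{B}_\infty^{-1}$. After regularizing $f^{in}$ to obtain smooth solutions $f^n$ along which all computations are justified, it suffices to prove \eqref{ineq:moment2} with a constant $C$ independent of $n$; passage to the limit then proceeds by the classical weak compactness arguments inherited from \cite{AR75,DLvlasovmax}.

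Fixing a parameter $\delta$ strictly smaller than $T_c$, I would work on one subinterval $\intervalleff{t_0}{t_0+\delta}$ at a time. Writing the velocity ODE in \eqref{def:chara} in integral form and applying Gronwall to the magnetic rotation term $V\wedge B$ yields, for $s\in \intervalleff{t-\delta}{t}$,
\begin{equation*}
\abs{V(s)-V(t)}\leq e^{\norme{B}_\infty \delta}\bigl(Q(t,\delta)+\norme{B}_\infty \delta\, \abs{V(t)}\bigr),
\end{equation*}
so that, provided $\delta$ is small compared to $T_c$, the displacement of the velocity characteristic is essentially controlled by $Q(t,\delta)$, up to a lower-order correction proportional to $\abs{V(t)}$ that can be absorbed into the moment bounds. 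I would then reproduce Pallard's bootstrap on the representation of $\rho$: decomposing the velocity integral into contributions from small and large velocities, and exploiting the a priori bounds \eqref{eq:conser_norme} and \eqref{def:energie}, one closes a nonlinear inequality relating $M_{k_0}(t)$ and $Q(t,\delta)$. This produces an increment estimate bounding $M_{k_0}(t_0+\delta)$ in terms of $M_{k_0}(t_0)$ and of the quantities listed in \eqref{constants}, but crucially not of the absolute time $t_0$.

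A finite induction over $\lceil T/\delta \rceil$ subintervals then yields \eqref{ineq:moment2} on $\intervalleff{0}{T}$. The main obstacle lies in the bootstrap step: Pallard's estimate in the unmagnetized case exploits the fact that $V(s)\approx V(t)$ up to an $\mathcal{O}(Q(t,\delta))$ error, so that velocity-based decompositions of $\rho$ are preserved along the flow. With a magnetic field the velocity rotates on the time scale $T_c$, and one must show that for $\delta<T_c$ this rotation is a lower-order perturbation that can be absorbed in the final Gronwall argument. Making this quantitative while keeping all constants expressed solely in terms of \eqref{constants} is where the bulk of the work lies, and it is also the reason the approach does not extend to a non-uniform $B:=B(t,x)$, for which the characteristic flow cannot be controlled by the single scalar $\norme{B}_\infty$.
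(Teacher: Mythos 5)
Your proposal follows essentially the same route as the paper: regularize, slice $\intervalleff{0}{T}$ into subintervals of length $T_B\ll T_c$ (the paper fixes $T_B$ by $\norme{B}_{\infty}T_B\exp(T_B\norme{B}_{\infty})=2^{-10}$), control the velocity characteristic by exactly the Gr\"onwall inequality you write, run Pallard's $G/B/U$ decomposition on each slice to close the nonlinear inequality between $Q(t,\delta)$ and $M_{2+\varepsilon}$, and conclude by a finite induction whose constants depend only on \eqref{constants}. The one step you leave as ``where the bulk of the work lies'' --- showing that on the ugly set the rotation correction $\norme{B}_{\infty}\delta\abs{V}$ is dominated by $\abs{V}$ itself (so that $\abs{V(s)}\sim\abs{v}$ and $\abs{V(s)-V_*(s)}\sim\abs{v-v_*}$ persist, and the lower bound $\abs{Y(s)}\gtrsim\abs{v-v_*}\abs{s-s_0}$ survives the extra $\delta\norme{B}_{\infty}\abs{v-v_*}$ term in $Y''$) --- is precisely the content of the paper's Lemma 3.3 and its use in the ugly-set time integration, and is made quantitative there exactly by the smallness condition defining $T_B$.
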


\begin{remark}
	If $f^{in}$ satisfies the assumptions of the previous theorem, then all the moments of order $k$ such that $0\leq k < k_0$ are also propagated for the solution $f$, simply because of the following H\"older inequality
	\begin{equation}\label{ineq:ptitmoments}
	\iint\abs{v}^{k}f(t,x,v)dvdx \leq \norme{f}_1^\frac{k_0-k}{k_0}\left(\iint\abs{v}^{k_0}f(t,x,v)dvdx\right)^\frac{k}{k_0}
	\end{equation}
	where we use the decomposition $\abs{v}^{k}f=f^\frac{k_0-k}{k_0} \abs{v}^{k}f^\frac{k}{k_0}$ and the exponents $p=\frac{k_0}{k_0-k}, q=\frac{k_0}{k}$.
\end{remark}

Like in \cite{LP91,R21}, we assume we have smooth solutions to conduct the proof in \cref{sec:prop}, and since the a priori estimates depend only on \eqref{constants} we can pass to the limit in the approximate Vlasov--Poisson system first introduced in \cite{AR75}. %Here we present the regularization detailed in \cite{G13}.
In fact, \cref{theo:main} will be a consequence of the main estimate in this paper which will only hold for these smooth solutions because it is an estimate on $Q$ (given in \eqref{def:Q}), which isn't necessarily well-defined for functions in Lebesgue spaces. 
We now give this estimate on $Q$.

\noindent\textbf{\large Main estimate on $Q$:}\\
\indent For all $T>0$ we have
\begin{equation}\label{ineq:mainQ}
N(T):=\underset{0\leq t \leq T}{\sup} Q(t,t) \leq C,
\end{equation}
with $C$ that depends on the constants in \eqref{constants}. In the following remark, we explain how \cref{theo:main} is a consequence of \eqref{ineq:mainQ}.

\begin{remark}\label{rem:moment}
	The estimate on propagation of velocity moments \eqref{ineq:moment2} in \cref{theo:main} follows from \eqref{ineq:mainQ} because we have:
	\begin{align*}
	\iint_{\R^3 \times \R^3} \abs{v}^k f(t,x,v) dvdx & = \iint_{\R^3 \times \R^3} \abs{V(t;0,x,v)}^k f^{in}(x,v) dvdx\\
	& \leq \iint_{\R^3 \times \R^3} (\abs{v} + N(T))^k \exp(kt\norme{B}_{\infty}) f^{in}(x,v) dv dx\\
	& \leq 2^{k-1}\exp(kt\norme{B}_{\infty})\left(\iint_{\R^3 \times \R^3} \abs{v}^k f^{in}(x,v) dvdx+N(T)^k \norme{f^{in}}_1\right)
	\end{align*}
	The first inequality above is obtained through a Gr\"onwall inequality on $\abs{V(t;0,x,v)}$, indeed thanks to \eqref{def:chara} we can write
	\begin{equation}
	V(t;0,x,v)=v + \int_{0}^{t} E(s,X(s;0,x,v))ds+\int_{0}^{t} V(s;0,x,v) \wedge B(s)ds
	\end{equation}
	which implies
	\begin{align*}
	\abs{V(t;0,x,v)} & \leq \abs{v}+Q(t,t)+\norme{B}_{\infty} \int_{0}^{t} \abs{V(s;0,x,v)}ds\\
	& \leq \abs{v}+N(T)+\norme{B}_{\infty} \int_{0}^{t} \abs{V(s;0,x,v)}ds
	\end{align*}
	This is the classical Gr\"onwall inequality which allows us to conclude that
	\begin{equation}
	\abs{V(t;0,x,v)} \leq (\abs{v}+N(T))\exp(t\norme{B}_{\infty}).
	\end{equation}
	The second inequality is just due to the fact that $2^{k-1}(1+x^k) \geq (1+x)^k$ for $x\geq 0$.
\end{remark}
We finish this subsection by discussing the periodic case. Indeed, we can generalize theorem 4 of \cite{P12}  as well as the improvement by Chen and Chen \cite{CC19} in the same way as in the full space problem. As is explained in \cite{P12}, on the torus we need a stronger assumption on the moments because of the weaker dispersion properties of the periodic magnetized Vlasov--Poisson system. In essence, this translates to the fact that charged particles can't go to infinity but have to come back and this physical property isn't offset by the presence of an external magnetic field. In practice, this means we can only show propagation of velocity moments of order $k>3$ like in \cite{CC19}.
\begin{theorem}[Propagation of moments in $\mathbb{T}^3$ with $B:=B(t)$]\label{theo:mainp}
	Let $k_0>3, T>0, f^{in}=f^{in}(x,v)\geq 0$ a.e. with $f^{in}\in L^1\cap L^\infty (\mathbb{T}^3\times \R^3)$ and assume that
	\begin{equation}\label{ineq:momentinip}
	\iint_{\mathbb{T}^3\times \R^3}\abs{v}^{k_0}f^{in}dxdv < \infty.
	\end{equation}
	Furthermore let $B:=B(t)$ verify \eqref{reguBt}. Then there exists a weak solution
	\begin{equation}
	f \in C(\R_+;L^p(\mathbb{T}^3 \times \R^3)) \cap L^\infty(\R_+;L^p(\mathbb{T}^3 \times \R^3))
	\end{equation}
	$(1\leq p < +\infty)$ to the Cauchy problem for the Vlasov--Poisson system with magnetic field \eqref{sys:VPwB} in $\mathbb{T}^3 \times \R^3$ such that
	%\begin{equation}\label{ineq:mainQ}
	%N(T):=\underset{0\leq t \leq T}{\sup} Q(t,t) \leq C_1,
	%\end{equation}
	%and
	\begin{equation}%\label{ineq:moment2}
	\underset{0\leq t \leq T}{\sup} \iint_{\R^3 \times \R^3} \abs{v}^{k_0} f(t,x,v) dvdx \leq C
	\end{equation}
	with $C$ that depends only on
	\begin{equation}%\label{constants}
	T, k_0, \norme{B}_{\infty}, \mathcal{E}(0), \norme{f^{in}}_1, \norme{f^{in}}_\infty, \iint_{\R^3\times \R^3}\abs{v}^{k_0}f^{in}dxdv.
	\end{equation}
\end{theorem}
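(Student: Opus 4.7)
The plan is to prove Theorem 2.2 by mirroring the strategy used for Theorem 2.1, but starting from the periodic analogue of Pallard's moment estimate (theorem 4 of \cite{P12}, together with the improvement of Chen and Chen \cite{CC19}) rather than from the full-space version. Exactly as in Remark 2.3, the Gr\"onwall argument that derives the moment bound \eqref{ineq:moment2} from the control of $Q$ is purely local in space and carries over verbatim to $\mathbb{T}^3$, so everything reduces to establishing a bound
\[
N(T) := \sup_{0 \le t \le T} Q(t,t) \le C,
\]
with $C$ depending only on the constants in \eqref{constants}.

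The central step is to establish the Pallard/Chen--Chen type inequality on a single short interval $[t-\delta, t]$ whose length $\delta$ is small compared with the cyclotron period $T_c = \norme{B}_\infty^{-1}$. On such an interval, the contribution of the rotation $V \wedge B$ to the variation of $V(s;t,x,v)$ is $O(\norme{B}_\infty \delta) = O(\delta / T_c)$ times the typical speed, so it can be absorbed as a small perturbation of the linear-in-time motion $V(s) \approx v$ underlying the characteristic analysis in \cite{P12,CC19}. I would rewrite the system \eqref{def:chara} with this perturbation, redo the periodic close/far decomposition of $E$, and estimate each piece using $\norme{f(t)}_p = \norme{f^{in}}_p$, the energy bound $\norme{\rho(t)}_{5/3} \le C_2$, and the previously controlled $M_{k_0}$. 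The requirement $k_0 > 3$ enters precisely in the far-field estimate on $\mathbb{T}^3$ as in \cite{CC19}, since on the torus the weaker dispersion forces one extra moment compared to the full space. This yields a short-time inequality of the form $Q(t,\delta) \le \Phi(\delta, M_{k_0}(t), \text{a priori data})$, from which one extracts a uniform bound $Q(t,\delta_0) \le K$ for some $\delta_0$ of order $T_c$ depending only on the quantities in \eqref{constants}.

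Having secured the short-time estimate, I would then perform the induction in $n$ on successive intervals of length $\delta_0$ exactly as for Theorem 2.1: once $M_{k_0}$ is controlled on $[0, n\delta_0]$, applying the short-time estimate at time $n\delta_0$, with $f(n\delta_0, \cdot)$ as a new initial datum, controls $Q$ and hence $M_{k_0}$ on $[n\delta_0, (n+1)\delta_0]$, the constants growing only exponentially in $n$. Since the number of steps $T/\delta_0$ is bounded in terms of $T$ and $\norme{B}_\infty$ alone, finitely many iterations cover $[0,T]$ and yield \eqref{ineq:mainQ} on the full interval, from which Theorem 2.2 follows in the same way as Theorem 2.1 follows from its main estimate.

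The main obstacle, as in the full-space case, is the rotation: on time scales of order $T_c$ or larger the velocity characteristic can turn appreciably, and the straight-line expansion that underlies the close/far splitting of \cite{P12,CC19} breaks down, so the constants in Pallard's estimate cease to be controllable by the a priori bounds. Enforcing $\delta \le \delta_0$ with $\delta_0 T_c^{-1}$ sufficiently small is precisely what keeps the perturbation of the straightened characteristics under control and justifies performing the induction in finitely many steps.
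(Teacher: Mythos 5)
Your proposal follows essentially the same route as the paper: reduce to the main estimate on $Q$, prove a Chen--Chen/Pallard-type short-time inequality on intervals of length small compared with the cyclotron period $T_c=\norme{B}_{\infty}^{-1}$ (where the magnetic rotation is a controllable perturbation of the characteristics and the lattice-translate count on $\mathbb{T}^3$ forces the order $k_0>3$), and then iterate over finitely many such intervals by induction. This matches the paper's proof via \cref{prop:Q1p} and the analogue of \cref{prop:allT}.
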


Just like for \cref{theo:main}, we show propagation of moments in the periodic case using a regularization of the magnetized Vlasov--Poisson system in $\mathbb{T}^3$. This is made possible by the  \cite{battrein93}, where the existence of weak solutions to the periodic Vlasov--Poisson system is proved using such a regularization of the system.

\subsection{Uniqueness}
%For all the uniqueness results and so also in \cref{sec:uni}, we assume the same regularity on $B$ \eqref{reguniB}.
We remind the reader that for results related to uniqueness we consider a general magnetic field that can depend on both time and position.  Furthermore, we will assume that the magnetic field has Lipschitz regularity in position. Hence, for all $T>0$ we have:
\begin{equation}\label{reguniB}
B \in L^\infty\left(\intervalleff{0}{T}, W^{1,\infty}(\R^3)\right).
\end{equation}
Throughout this subsection which deals with uniqueness and in \cref{sec:uni}, to lighten the computations we set $\norme{B}_{\infty}:=\norme{B}_{L^\infty\left(\intervalleff{0}{T}, W^{1,\infty}(\R^3)\right)}$ and $\norme{\nabla B}_\infty:=\norme{\nabla B}_{L^\infty\left(\intervalleff{0}{T},\R^3\right)}$.

As said above, Loeper's uniqueness result \cite{L06}  was extended to the Vlasov--Poisson system with constant magnetic field by the author in \cite{R21}. Now we show that Loeper's approach can also be generalized for $B:=B(t;x)$. However, with a general magnetic field we require extra regularity on the initial data $f^{in}$, as we highlight in the following theorem:
\begin{theorem}\label{theo:uniloeper}
	Let $T>0$, let $f^{in}=f^{in}(x,v)\geq 0$ a.e. with $f^{in}\in L^1\cap L^\infty (\R^3\times \R^3)$ and assume that the magnetic field $B:=B(t,x)$ verifies \eqref{reguniB}. Assume further that
	\begin{equation}\label{ineq:uniloepermomentini}
	\iint_{\R^3\times \R^3}\abs{v}^{6}f^{in}dxdv < \infty \mbox{ and } \iint_{\R^3\times \R^3}\abs{x}^{4}f^{in}dxdv < \infty.
	\end{equation}
	
	Then there exists at most one weak solution to \eqref{sys:VPwB} such that 
	\begin{equation}\label{rholinfty}
	\rho \in L^\infty(\intervalleff{0}{T} \times \R^3_x).
	\end{equation}
\end{theorem}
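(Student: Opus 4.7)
The plan is to adapt Loeper's Wasserstein-based uniqueness proof \cite{L06} to the magnetized setting, mirroring the treatment of the constant-$B$ case in \cite{R21}. Given two solutions $f_1, f_2$ with $\rho_i \in L^\infty(\intervalleff{0}{T}\times\R^3)$ sharing the initial datum $f^{in}$, let $(X_i,V_i)(t;0,x,v)$ denote their Lagrangian flows and introduce
$$\sigma(t) := \iint_{\R^3\times\R^3}\left(\abs{X_1(t)-X_2(t)}^2 + \abs{V_1(t)-V_2(t)}^2\right) f^{in}(x,v)\,dxdv.$$
Since $\sigma(0)=0$ and $\sigma(t)^{1/2}$ dominates the $2$-Wasserstein distance between $f_1(t)$ and $f_2(t)$, it suffices to show $\sigma \equiv 0$ on $\intervalleff{0}{T}$.

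A preparatory step is to extract a priori bounds on the sixth velocity moment and fourth position moment of each solution. The $L^\infty$ bound on $\rho$ combined with mass conservation yields $\norme{E(t)}_\infty \le C\norme{\rho(t)}_\infty^{2/3}\norme{\rho(t)}_1^{1/3}$ by the standard split-ball interpolation. Since the Lorentz force is orthogonal to the velocity, the usual moment identity becomes
$$\frac{d}{dt}\iint \abs{v}^k f_i\,dxdv \le k\norme{E}_\infty \iint \abs{v}^{k-1} f_i\,dxdv,$$
and a Gr\"onwall argument propagates $\iint \abs{v}^6 f^{in} < \infty$ into a uniform bound $M_6^{(i)}(t) \le C$ on $\intervalleff{0}{T}$; a companion ODE for $\iint \abs{x}^k f_i$, closed via H\"older's inequality and the just-obtained velocity moment bound, then propagates $\iint \abs{x}^4 f^{in} < \infty$ to $P_4^{(i)}(t) \le C$.

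Differentiating $\sigma(t)$ produces three contributions. The position cross term $(X_1-X_2)\cdot(V_1-V_2)$ is bounded by $\sigma(t)$ via Cauchy--Schwarz. For the electric contribution $(V_1-V_2)\cdot(E_1(X_1)-E_2(X_2))$ I proceed as in \cite{L06}, splitting $E_1(X_1)-E_2(X_2) = (E_1(X_1)-E_1(X_2)) + (E_1(X_2)-E_2(X_2))$ and invoking, respectively, the log-Lipschitz regularity of $E_1$ (a consequence of $\rho_1 \in L^\infty$) and the Loeper inequality $\norme{E_1-E_2}_{L^2} \le C\max_i\norme{\rho_i}_\infty^{1/2} W_2(\rho_1,\rho_2) \le C\sigma(t)^{1/2}$ combined with a change of variables under the $(X_2,V_2)$-flow; this yields a bound of the form $C\sigma(t)(1+\abs{\log \sigma(t)})$. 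The new magnetic cross term $(V_1-V_2)\cdot(V_1\wedge B(X_1)-V_2\wedge B(X_2))$ is handled by the decomposition
$$V_1\wedge B(X_1)-V_2\wedge B(X_2) = (V_1-V_2)\wedge B(X_1) + V_2\wedge(B(X_1)-B(X_2));$$
the scalar product of $V_1-V_2$ with the first summand vanishes identically by the triple product formula (the very cancellation that trivialized the constant-$B$ case), while the second is bounded pointwise by $\norme{\nabla B}_\infty\abs{V_1-V_2}\abs{V_2}\abs{X_1-X_2}$ thanks to \eqref{reguniB}.

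The unboundedness of $\abs{V_2}$ in this residual magnetic term is precisely what forces the moment assumptions in \eqref{ineq:uniloepermomentini}: via the pushforward identity $\iint \abs{V_2(t)}^p f^{in} = \iint \abs{v}^p f_2(t) = M_p^{(2)}(t)$, H\"older's inequality with exponents $(3,3/2)$ combined with the interpolation
$$\iint \abs{X_1-X_2}^3 f^{in} \le \left(\iint \abs{X_1-X_2}^2 f^{in}\right)^{1/2}\left(\iint \abs{X_1-X_2}^4 f^{in}\right)^{1/2} \le C\sigma(t)^{1/2},$$
in which the $L^4$ factor is controlled by $P_4^{(1)}(t)+P_4^{(2)}(t)$, produce a closed bound on the magnetic contribution in terms of $\sigma(t)$, $\norme{\nabla B}_\infty$, $M_6$, and $P_4$. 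Assembling the three contributions gives a differential inequality for $\sigma$ which, combined with $\sigma(0)=0$ and an Osgood-type argument, forces $\sigma \equiv 0$ on $\intervalleff{0}{T}$. The main obstacle throughout is the handling of the magnetic term: the factor $\abs{V_2}$ cannot be controlled pointwise, and the specific moment exponents $6$ and $4$ in \eqref{ineq:uniloepermomentini} are dictated by the requirement that the H\"older/interpolation chain above trades powers of $\abs{v}$ for powers of $\abs{X_1-X_2}$ in a manner compatible with Osgood's criterion on the resulting majorant.
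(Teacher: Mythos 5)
Your overall architecture coincides with the paper's: the same phase--space $L^2$ functional (the paper's $Q(t)=\tfrac12\sigma(t)$), the same Loeper-style treatment of the electric term yielding $C\sigma(1+\abs{\log\sigma})$, the same decomposition of the magnetic term with the observation that $(V_1-V_2)\cdot\left[(V_1-V_2)\wedge B(t,X_1)\right]=0$, and the same use of the moment hypotheses \eqref{ineq:uniloepermomentini} to tame the residual term involving $\abs{V_2}\,\abs{B(t,X_1)-B(t,X_2)}$. Your preparatory propagation of the sixth velocity and fourth position moments is equivalent to what the paper obtains from the pointwise Gr\"onwall bound \eqref{ineq:V(t)}, $\abs{V_i(t,x,v)}\le(\abs{v}+T\norme{E_i}_\infty)e^{T\norme{B}_\infty}$, which is legitimate here because $\rho_i\in L^\infty$ forces $E_i\in L^\infty$.

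The gap is in your closing step. Your Cauchy--Schwarz/H\"older/interpolation chain for the residual magnetic term gives
\begin{equation*}
\int_{\R^6} f^{in}\abs{V_1-V_2}\,\abs{V_2}\,\abs{X_1-X_2}\,dxdv \;\le\; C\,\sigma^{1/2}\Bigl(M_6^{1/3}\bigl(\sigma^{1/2}P_4^{1/2}\bigr)^{2/3}\Bigr)^{1/2}\;=\;C\,\sigma^{2/3},
\end{equation*}
so your differential inequality reads $\dot\sigma\le C\sigma(1+\abs{\log\sigma})+C\sigma^{2/3}$. But $s\mapsto s^{2/3}$ is \emph{not} an Osgood modulus ($\int_0 s^{-2/3}ds<\infty$), and the problem $\dot\sigma= C\sigma^{2/3}$, $\sigma(0)=0$ admits the nonzero solution $\sigma(t)=(Ct/3)^3$; the ``Osgood-type argument'' you invoke therefore does not close the proof. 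The paper proceeds differently at exactly this point: it substitutes the pointwise bound $\abs{V_2(t)}\le(\abs{v}+T\norme{E_2}_\infty)e^{T\norme{B}_\infty}$, controls the bounded part of the weight by plain Cauchy--Schwarz (a contribution $\le CQ(t)$), and for the $\abs{v}$-weighted part sacrifices only \emph{one} of the two factors of $\abs{Y_1-Y_2}$, pairing $f^{in}\abs{v}\abs{Y_1-Y_2}^2$ against the fixed constant $I=\int f^{in}\abs{v}^2(\abs{V_1}^2+\abs{V_2}^2+\abs{X_1}^2+\abs{X_2}^2)$, which is finite precisely because of \eqref{ineq:uniloepermomentini}; this keeps the majorant of $\dot Q$ of the form $CQ(1+\log\tfrac1Q)$. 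The exponent bookkeeping here is the crux of the whole theorem (even in the paper's version one must check that the Cauchy--Schwarz step does not leave a stray $Q^{1/2}$), and your chain loses exactly there: any scheme that converts part of $\abs{X_1-X_2}^2$ into moment constants leaves a sublinear power of $\sigma$ and destroys uniqueness of the trivial solution of the resulting ODE. You need to rework the residual estimate so that the bound stays (log-)linear in $\sigma$.
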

However, to exploit this result, it would be interesting to give a condition on the initial data that would guarantee \eqref{rholinfty}. Hence, in the next proposition, we give an explicit condition which implies the boundedness of $\rho$.
\begin{proposition} 
	Let $B$ verify \eqref{reguniB} and let $f^{in}$ satisfy the assumptions of \cref{theo:main} with $k_0>6$. We also assume that $f^{in}$ is such that for all $R>0$ and $T>0$
	%\begin{equation}\label{eq:boundrho}
	%\begin{aligned}
	%\text{ess sup}\{f^{in}(y+vt,w), \abs{y-x}&\leq (R+\norme{B}_\infty \abs{v})t^2e^{\norme{B}_\infty t},\abs{w-v}\leq (R+\norme{B}_\infty \abs{v})te^{\norme{B}_\infty t} \}\\
	%&\in L^\infty(\intervallefo{0}{T}\times \R^3_x,L^1(\R^3_v))
	%\end{aligned}
	%\end{equation}
	\begin{equation}\label{eq:boundrho}
	g_{R}(t,x,v) \in L^\infty(\intervalleff{0}{T}\times \R^3_x,L^1(\R^3_v)),
	\end{equation}
	where
	\begin{equation}\label{eq:boundrho1}
	g_{R}(t,x,v)=\underset{(y,w) \in S_{t,x,v,R}}{\sup} f^{in}(y+vt,w) 
	\end{equation}
	with
	\begin{equation}\label{eq:boundrho2}
	S_{t,x,v,R}=\left\{(y,w) \colon \abs{y-x}\leq (R+\norme{B}_\infty \abs{v})t^2e^{\norme{B}_\infty t},\abs{w-v}\leq (R+\norme{B}_\infty \abs{v})te^{\norme{B}_\infty t}\right\}.
	\end{equation}
	Then any weak solution $f$ to \eqref{sys:VPwB} with initial data $f^{in}$ verifies
	\begin{equation*}%\label{rholinfty2}
	\rho \in L^\infty(\intervalleff{0}{T} \times \R^3_x)
	\end{equation*}
	for all $T>0$.
\end{proposition}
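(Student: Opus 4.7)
The plan is to exploit the Lagrangian identity $f(t,x,v)=f^{in}(X(0;t,x,v),V(0;t,x,v))$, bootstrap the regularity of the self-consistent field, and then control the backward characteristics by a Gr\"onwall argument. First, since $f^{in}$ satisfies the hypotheses of \cref{theo:main} with $k_0>6$, that theorem supplies a weak solution whose moment $M_{k_0}(t)$ is bounded on $\intervalleff{0}{T}$. The classical interpolation (optimizing the split $\int_{\abs{v}\leq\lambda}+\int_{\abs{v}>\lambda}$ in $\lambda$) then gives
\begin{equation*}
\rho(t,x)\leq C\norme{f^{in}}_\infty^{k_0/(k_0+3)}\left(\int_{\R^3}\abs{v}^{k_0}f(t,x,v)dv\right)^{3/(k_0+3)},
\end{equation*}
so that $\rho\in L^\infty(\intervalleff{0}{T};L^{(k_0+3)/3}(\R^3))$. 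Because $(k_0+3)/3>3$, splitting the convolution $E=-\nabla\mathcal{G}_3\ast\rho$ into the regions $\abs{x-y}\leq 1$ and $\abs{x-y}>1$ and applying H\"older with conjugate exponent strictly smaller than $3/2$ yields $E\in L^\infty(\intervalleff{0}{T}\times\R^3)$. Set $R:=\norme{E}_\infty$.

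With $R$ in hand I would derive the characteristic estimates that match the constraints defining $S_{t,x,v,R}$. Decomposing $V(s)\wedge B(s,X(s)) = (V(s)-v)\wedge B(s,X(s)) + v\wedge B(s,X(s))$ in \eqref{def:chara} and integrating backward from $t$ to $r$ produces
\begin{equation*}
\abs{V(r;t,x,v)-v}\leq (R+\norme{B}_\infty\abs{v})t+\norme{B}_\infty\int_r^t\abs{V(s;t,x,v)-v}ds,
\end{equation*}
whence Gr\"onwall gives $\abs{V(r;t,x,v)-v}\leq (R+\norme{B}_\infty\abs{v})te^{\norme{B}_\infty t}$ uniformly in $r\in\intervalleff{0}{t}$ and $(x,v)\in\R^3\times\R^3$. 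Integrating once more in $r$ and using $X(0;t,x,v)-(x-vt)=\int_0^t (v-V(r;t,x,v))\,dr$ yields $\abs{X(0;t,x,v)-(x-vt)}\leq (R+\norme{B}_\infty\abs{v})t^2 e^{\norme{B}_\infty t}$. These are precisely the bounds entering the definition of $S_{t,x,v,R}$, so the pair $(X(0;t,x,v),V(0;t,x,v))$ realizes an admissible point for the supremum defining $g_R$.

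Since $f$ is constant along the characteristics \eqref{def:chara}, $f(t,x,v)=f^{in}(X(0;t,x,v),V(0;t,x,v))\leq g_R(t,x,v)$ pointwise, and integration in $v$ gives
\begin{equation*}
\rho(t,x)=\int_{\R^3}f(t,x,v)dv\leq\norme{g_R(t,x,\cdot)}_{L^1(\R^3_v)},
\end{equation*}
which is finite and uniformly bounded in $(t,x)\in\intervalleff{0}{T}\times\R^3$ by hypothesis \eqref{eq:boundrho}. This yields $\rho\in L^\infty(\intervalleff{0}{T}\times\R^3)$.

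The main obstacle is the bootstrap step: the energy identity only provides $\rho\in L^{5/3}$, which is strictly too weak to place $E$ in $L^\infty$, and without $\norme{E}_\infty<\infty$ the Gr\"onwall argument yielding the size of $S_{t,x,v,R}$ collapses. The threshold $k_0>6$ is sharp for this simple approach, since it is exactly the point at which $\rho\in L^p$ with $p>3$ and the $\abs{x}^{-2}$ singularity of $\nabla\mathcal{G}_3$ becomes integrable through the local H\"older estimate; weakening this to $k_0>3$ as in \cref{theo:main} would require a more delicate treatment of the characteristics that does not pass through an $L^\infty$ bound on $E$.
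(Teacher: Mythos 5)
The paper does not actually prove this proposition: it defers entirely to \cite[proposition 2.7]{R21} and asserts that the argument is unchanged for $B:=B(t,x)$. Your reconstruction is precisely the intended argument --- interpolation giving $\rho\in L^\infty(\intervalleff{0}{T};L^{(k_0+3)/3})$ with exponent $>3$, the split convolution estimate \eqref{ineq:Erho} giving $E\in L^\infty$, and backward Gr\"onwall bounds on $(X,V)$ whose constants reproduce exactly the radii appearing in \eqref{eq:boundrho2} --- so in approach you are fully aligned with what the paper cites.

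Two caveats. First, the final matching step has a sign problem: your (correct) estimates place $\left(X(0;t,x,v),V(0;t,x,v)\right)$ in a box centred at $(x-vt,v)$, whereas $g_R$ takes the supremum of $f^{in}(y+vt,w)$ over $y$ near $x$, i.e.\ over a box centred at $(x+vt,v)$. As written, your claim that $(X(0),V(0))$ is admissible for the supremum fails: you would need $y=X(0)-vt$, which lies near $x-2vt$, not near $x$. (Free transport already exhibits the mismatch: $f(t,x,v)=f^{in}(x-vt,v)$ while $g_R(t,x,v)\geq f^{in}(x+vt,v)$.) The definition should be read with $y-vt$; this sign slip is inherited from \cite{LP91} and \cite{R21} and is harmless for the symmetric decay condition of the subsequent remark, but you should record the recentring rather than assert the match. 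Second, your bootstrap step ``$M_{k_0}(t)$ is bounded on $\intervalleff{0}{T}$'' invokes \cref{theo:main}, which is proved only for $B:=B(t)$, while the proposition allows $B:=B(t,x)$ satisfying \eqref{reguniB}; for position-dependent fields, propagation of moments is exactly the open problem flagged in the introduction. Moreover the proposition asserts the conclusion for \emph{any} weak solution, whereas \cref{theo:main} only furnishes \emph{one} solution with propagated moments. So either $\sup_{\intervalleff{0}{T}}M_{k_0}(t)<\infty$ must be taken as a hypothesis on the solution, or the statement must be restricted to $B:=B(t)$ and to the constructed solution. This gap is shared by the paper's own one-line justification, but your write-up makes the reliance explicit without resolving it.
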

This proposition was shown in \cite[proposition 2.7]{R21} in the case of a constant magnetic field and remains unchanged when we take a general $B:=B(t,x)$.

\begin{remark}
	Just like in the unmagnetized case, the above condition \eqref{eq:boundrho} is satisfied when the initial data decays sufficiently fast in the velocity variable, or more precisely for functions $f^{in}$ that satisfy
	\begin{equation}
	f^{in}(x,v) \leq \frac{C}{1+\abs{v}^p} \text{ with } p>3.
	\end{equation}
	Indeed, if the initial data verifies the above bound then we trivially have that 
	\begin{equation*}
	g_{R}(t,x,v) \leq \frac{C}{1+\left(\abs{v}-(R+\norme{B}_{\infty} \abs{v})te^{\norme{B}_{\infty} t}\right)^p}
	\end{equation*}
	and the function on the right hand side of the inequality is in $L^1(\R^3_v)$ uniformly in time and space because $p>3$.
\end{remark}

Now we present a theorem which is the second main result of this paper, where we show that the uniqueness criterion proved in \cite[theorems 1.1 and 1.2]{M16} also applies to \eqref{sys:VPwB} with $B$ verifying \eqref{reguniB}, generalizing \cref{theo:uniloeper} because it allows for solutions with unbounded charge density.
\begin{theorem}\label{theo:uni}
	Let $T>0$ and $f^{in} \geq 0$ a.e. with $f^{in} \in L^1\cap L^\infty(\R^3 \times \R^3)$, assume further that $B$ verifies \eqref{reguniB} and that $f^{in}$ satisfies
%	\begin{equation}\label{ineq:unif0}
%	\iint_{\R^3 \times \R^3} \abs{v}^m f^{in}(x,v)dxdv < +\infty,
%	\end{equation}
%	for some $m > 6$.
%	
%	Now let $f \in L^\infty(\intervalleff{0}{T},L^1\cap L^\infty(\R^3 \times \R^3))$ be a weak solution provided by \cref{theo:main} with initial data $f^{in}$. 
	\begin{equation}\label{ineq:unimoment}
	\forall k \geq 1, \quad \iint_{\R^3 \times \R^3} \abs{v}^k f^{in}(x,v)dxdv \leq (C_0 k)^\frac{k}{3},
	\end{equation}
	for some constant $C_0$ independent of $k$. 
	
	Then there exists at most one weak solution to \eqref{sys:VPwB}, and any weak solution $f$ with initial data $f^{in}$ verifies
	\begin{equation}\label{ineq:uni}
	\underset{\intervalleff{0}{T}}{\sup} \, \underset{p \geq 1}{\sup} \, \frac{\norme{\rho(t)}_p}{p} < +\infty.
	\end{equation}
\end{theorem}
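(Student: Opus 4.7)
The plan is to adapt Miot's proof from \cite{M16} in two stages: first propagate the Gaussian-type moment bound (\ref{ineq:unimoment}) to deduce the $L^p$ estimate (\ref{ineq:uni}) for any weak solution, and then use (\ref{ineq:uni}) to close an Osgood-type differential inequality on the difference of two characteristic flows, with particular attention to the new $v\wedge B(t,x)$ term.

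For the propagation stage, the key point is that $V\wedge B(t,X)$ is orthogonal to $V$, so $\tfrac{d}{ds}\abs{V}^2 = 2V\cdot E(s,X)$ along characteristics exactly as in the unmagnetized case. Differentiating $M_k(t)$ in time, integrating by parts and using Hölder against $\norme{f}_\infty$ and an $L^r$ norm of $E$ then yields the same ODE inequality on $M_k(t)$ as in \cite{M16}. Iterating in $k$ propagates (\ref{ineq:unimoment}) in the form $M_k(t)\leq (C(T)k)^{k/3}$, and the classical interpolation $\norme{\rho(t)}_p\leq C\norme{f^{in}}_\infty^{1-\theta}M_{k(p)}(t)^{\theta}$ with $k(p)\sim 3(p-1)$ then gives (\ref{ineq:uni}).

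For uniqueness, let $f_1,f_2$ be two weak solutions sharing the data $f^{in}$. By (\ref{ineq:uni}) each self-consistent field $E_i$ is log-Lipschitz in $x$ uniformly in $t$, so the characteristic flows $(X_i,V_i)$ from (\ref{def:chara}) are well-defined, and I would track a logarithmic functional in the spirit of \cite{M16},
\[ Q(t) := \iint \log\!\Bigl(1 + \eta^{-2}\bigl(\abs{X_1-X_2}^2(t) + \abs{V_1-V_2}^2(t)\bigr)\Bigr) f^{in}(x,v)\,dxdv. \]
Differentiating $Q$ produces the usual electric-field contribution, controlled as in \cite[thm 1.2]{M16} thanks to (\ref{ineq:uni}), plus a new magnetic contribution coming from $V_1\wedge B(t,X_1) - V_2\wedge B(t,X_2) = (V_1-V_2)\wedge B(t,X_1) + V_2\wedge\bigl(B(t,X_1)-B(t,X_2)\bigr)$. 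The first summand produces a harmless term of order $\norme{B}_{\infty}Q(t)$, while the second is bounded pointwise by $\norme{\nabla B}_\infty\abs{V_2}\abs{X_1-X_2}$.

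This last term is the principal obstacle because $\abs{V_2}$ is unbounded. The remedy is to absorb $\abs{V_2}$ via Hölder at every $p\geq 2$, writing $\iint \abs{V_2}\abs{X_1-X_2}f^{in}\leq M_p(t)^{1/p}\bigl(\iint\abs{X_1-X_2}^{p/(p-1)}f^{in}\bigr)^{(p-1)/p}$, inserting the propagated bound $M_p(t)^{1/p}\lesssim p^{1/3}$ from the first stage, and controlling the remaining factor in terms of $Q(t)$ by the same truncation trick used by Miot to handle the velocity variable. Optimizing $p$ as a function of $Q(t)$ should lead to an Osgood inequality $Q'(t)\leq \Phi(Q(t))$ with $\int_{0^+}d\sigma/\Phi(\sigma)=+\infty$, so that $Q(0)=0$ forces $Q\equiv 0$ and therefore $(f_1,E_1)=(f_2,E_2)$. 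That the exponent $k/3$ in (\ref{ineq:unimoment}) is precisely what makes this optimization close is what preserves Miot's sufficient condition despite the added magnetic field.
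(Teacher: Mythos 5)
Your first stage (propagating \eqref{ineq:unimoment} to $M_k(t)\leq (Ck)^{k/3}$ and deducing \eqref{ineq:uni} by interpolation) is correct and is essentially the paper's Section 4.1: the orthogonality $\left(V\wedge B\right)\cdot V=0$ makes the magnetic field drop out of $\frac{d}{ds}\abs{V}^k$, and the rest is Miot's computation. The gap is in your uniqueness stage. You run a \emph{first-order} Osgood argument on a logarithmic functional, justified by the claim that \eqref{ineq:uni} makes each $E_i$ log-Lipschitz in $x$. That claim is false: \eqref{ineq:uni} only gives $\norme{\rho_i(t)}_p\lesssim p$, which permits logarithmic blow-up of $\rho_i$, and the best modulus of continuity one can then extract for $E_i$ is of the type $\omega(r)=\inf_{p>3}\,p^2 r^{1-3/p}\sim r\log^2(1/r)$, which fails the first-order Osgood condition since $\int_{0^+}dr/(r\log^2(1/r))<\infty$. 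Log-Lipschitz fields correspond to $\rho\in L^\infty$, i.e.\ to Loeper's criterion (\cref{theo:uniloeper}), not to the present theorem. So your optimization in $p$ cannot produce a $\Phi$ with $\int_{0^+}d\sigma/\Phi(\sigma)=+\infty$. The whole point of Miot's method --- which the paper stresses in the introduction, and which is why the criterion fails for 2D Euler --- is to exploit the \emph{second-order} structure $\ddot X=E(t,X)+\dot X\wedge B(t,X)$: one works with $D(t)=\iint\abs{X_1-X_2}\,f^{in}\,dxdv$, writes $X_1-X_2$ as a double time integral, and obtains $\mathcal F''(t)\leq Cp^2\mathcal F(t)$ for $\mathcal F(t)=\int_0^t\int_0^s D(\tau)^{1-3/p}d\tau ds$; a second-order differential inequality tolerates the modulus $r\log^2(1/r)$ where a first-order one does not.

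That said, your handling of the genuinely new term $\abs{V_2}\,\abs{B(t,X_1)-B(t,X_2)}$ is exactly the right idea and matches the paper's estimate of its term $K(t)$: interpolate $\abs{B(t,X_1)-B(t,X_2)}\leq 2\norme{B}_{\infty}\abs{X_1-X_2}^{1-3/p}$, absorb $\abs{V_2}$ by H\"older against $\left(\iint\abs{v}^pf^{in}\right)^{1/p}\leq (C_0p)^{1/3}$, and use Jensen to land on $D^{1-3/p}$ --- this is precisely where the hypothesis \eqref{ineq:unimoment} enters. What is missing, besides the change of framework, is the companion treatment of the $(V_1-V_2)\wedge B$ term: inside the double time integral it is not a harmless $\norme{B}_\infty Q(t)$ but must be converted, via a Gr\"onwall inequality on $\abs{V_1-V_2}$, into further iterated time integrals of $\abs{E_1(X_1)-E_2(X_2)}$ and $\abs{X_1-X_2}^{1-3/p}$ so that everything closes on $D$ alone (the paper's term $J(t)$). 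If you rebuild your second stage around $D(t)$ and the double-integral representation, your $p$-H\"older trick slots in exactly where it is needed.
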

\begin{remark}
	In our framework, an important difference with \cite{M16} is that the uniqueness criterion isn't given by the inequality on the charge density \eqref{ineq:uni} but rather the stronger assumption on the moments of the solution \eqref{ineq:unimoment}.
\end{remark}

As mentioned above, the assumptions of \cref{theo:uni} are less restrictive than the condition \eqref{rholinfty} and thus allow us to consider initial data with unbounded charge density. This result is illustrated by the following theorem (\cite[theorem 1.3]{M16}):
\begin{theorem}[Miot, \cite{M16}]
	There exists $f^{in} \geq 0$ a.e. such that $f^{in} \in L^1\cap L^\infty(\R^3 \times \R^3)$ satisfying the assumptions of \cref{theo:uni} and such that
	\begin{equation}
	\rho_0(x)= \frac{4\pi}{3} \ln_-(\abs{x}), \quad \forall x \in \R^3,
	\end{equation}
	where $\ln_-=\max(-\ln(x),0)$ is the negative part of the function $\ln$.
\end{theorem}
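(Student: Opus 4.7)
The plan is to realize $f^{in}$ as the indicator function of a body whose $v$-slice above each point $x$ is a Euclidean ball of radius tuned to match the prescribed marginal density. Concretely, I would set
\[
f^{in}(x,v):=\mathbf{1}_{\{|v|\le R(x)\}},\qquad R(x):=\bigl(\ln_-|x|\bigr)^{1/3},
\]
so that $f^{in}$ is a characteristic function (hence $\norme{f^{in}}_\infty\le 1$), supported in $\{|x|\le 1\}\times\R^3_v$, and its marginal in velocity is exactly $\int_{\R^3}f^{in}(x,v)\,dv = \tfrac{4\pi}{3}R(x)^3 = \tfrac{4\pi}{3}\ln_-(|x|) = \rho_0(x)$, as required. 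Integrability of $f^{in}$ in $L^1$ is immediate from $\rho_0\in L^1(\R^3)$, since the logarithmic singularity at the origin is integrable in dimension three.

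The main task is then to verify the moment bound \eqref{ineq:unimoment}. Passing to spherical coordinates in both $x$ and $v$ yields
\[
\iint_{\R^3\times\R^3}|v|^k f^{in}(x,v)\,dxdv \;=\; \frac{16\pi^2}{k+3}\int_0^1 s^2\bigl(\ln(1/s)\bigr)^{(k+3)/3}\,ds.
\]
The change of variable $u=3\ln(1/s)$ reduces the remaining integral to a Gamma function, giving
\[
\iint|v|^k f^{in}\,dxdv \;=\; \frac{16\pi^2}{(k+3)\,3^{(k+3)/3+1}}\,\Gamma\!\left(\frac{k}{3}+2\right).
\]
Applying Stirling's formula $\Gamma(k/3+2)\le C\,k^{k/3+3/2}(3e)^{-k/3}$ and collecting the constants leads to an asymptotic of the form $\iint|v|^k f^{in}\,dxdv \le C\sqrt{k}\,(k/(9e))^{k/3}$, which is bounded by $(C_0 k)^{k/3}$ for any fixed $C_0>1/(9e)$ large enough to absorb the finitely many small values of $k$: indeed $(9eC_0)^{k/3}$ grows exponentially in $k$ and therefore dominates the polynomial factor $\sqrt k$.

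The only delicate point, and the reason one must choose exactly the cube root in the definition of $R$, is the tight dimensional balance built into the ansatz. A larger exponent would make $\rho_0$ less singular than the $\ln_-$ blow-up claimed by the theorem, while any smaller exponent would force the velocity moments to grow faster than $k^{k/3}$ and violate \eqref{ineq:unimoment}. No serious analytic obstacle is expected beyond this bookkeeping: the $R^3$ volume of a velocity ball in $\R^3$, matched against the Stirling growth $\Gamma(k/3+2)\sim(k/3)^{k/3}$, is precisely what makes the Miot exponent $k/3$ borderline achievable, so the construction is essentially forced by the scaling.
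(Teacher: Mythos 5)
Your construction is correct, and it is essentially the one in \cite{M16}: the paper itself states this result without proof, deferring entirely to Miot, whose example is precisely the indicator $f^{in}=\mathbf{1}_{\{|v|^3\le \ln_-(|x|)\}}$ that you propose. Your verification of the marginal, the $L^1\cap L^\infty$ bounds, and the moment condition \eqref{ineq:unimoment} via the Gamma-function computation and Stirling's formula is accurate, so nothing is missing.
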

\begin{remark}
If we assume that the magnetic field is uniform $B:=B(t)$, then the proofs of theorems \ref{theo:uniloeper} and \ref{theo:uni} are greatly simplified. We can in fact show that Loeper's and Miot's uniqueness criteria, which correspond to the conditions \eqref{rholinfty} and \eqref{ineq:uni}, are also valid for \eqref{sys:VPwB}. This means that we don't require the extra regularity on the initial data \eqref{ineq:uniloepermomentini} or \eqref{ineq:unimoment} to formulate the uniqueness criteria.
\end{remark}
%\begin{remark}
In \cref{sec:uni}, we will detail the proof of \cref{theo:uni} first because it is the main result of this section. Then we will present the proof of \cref{theo:uniloeper} where ingredients from the proof of \cref{theo:uni} are used, notably the estimate on the velocity characteristic. However in \cref{theo:uniloeper} we require a condition on the space moment of the initial data \eqref{ineq:uniloepermomentini} which isn't the case in \cref{theo:uni}.
%\end{remark}

Finally, with regards to uniqueness for Vlasov--Poisson, it would be very interesting to see if the conditions found in \cite{L06,M16} could be adapted to the periodic case.

\section{Propagation of velocity moments}\label{sec:prop}
In this section, we shall denote by $C$ a constant that can change from one line to another but that only depends on
\begin{equation}\label{conserved}
\mathcal{E}(0), \norme{f^{in}}_1, \norme{f^{in}}_\infty.
\end{equation}
%\subsection{The full space problem}\label{subsection:full}
As mentioned above, the whole proof is conducted using smooth functions.

We consider $k>2$ and $\varepsilon >0$ small enough, say $\varepsilon \in \intervalleoo{0}{\varepsilon_0}$ with $\varepsilon_0 \leq \frac{(k-2)}{2k}$.
As said in the introduction, the main difference with the analysis in \cite{P12} is that we're going to show propagation of moments for all time by using an induction argument using the cyclotron period $T_c=\norme{B}_{\infty}^{-1}$. We begin with the initialization, so we're first going consider to $T>0$ with $T \leq T_B$, where $T_B$ is the unique real number such that
\begin{equation}\label{def:TB}
T_B \in \R_+^* \text{ and } \norme{B}_{\infty}  T_B \exp(T_B \norme{B}_{\infty})=a,
\end{equation}
with $a >0$. In our method, since we can only obtain estimates on $Q$ for $T_B \ll T_c$, we just need $a$ small enough so we set $a=2^{-10}$.

Thus, we show propagation of velocity moments on $\intervalleff{0}{T_B}$ using the following result.
\begin{proposition}\label{prop:TB}
	For all $T>0$ such that $T \leq T_B$, \eqref{ineq:mainQ} is verified. More precisely we have the following estimate on $Q(t,t)$ for all $0\leq t \leq T$
	\begin{equation}
	Q(t,t) \leq C \exp(T\norme{B}_{\infty})^{\frac{2}{5}}(T^\frac{1}{2}+T^\frac{7}{5})
	\end{equation}
	with $C$ that only depends on 
	\begin{equation*}
	k, \mathcal{E}(0), \norme{f^{in}}_1, \norme{f^{in}}_\infty, M_k(0).
	\end{equation*}
\end{proposition}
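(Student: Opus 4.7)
The plan is to adapt Pallard's argument from \cite{P12} to the magnetized setting; the only new ingredient is a quantitative control of the characteristics that remains valid on the short time interval $T \leq T_B \ll T_c$. The strategy is to derive a self-improving integral inequality for $Q(t,t)$ and then close it by a bootstrap. As a first step, using the representation $E = -\nabla_x \mathcal{G}_3 \ast \rho$ together with the measure-preserving nature of the Vlasov flow, one obtains the pointwise bound
\begin{equation*}
\abs{E(s, X(s;0,x,v))} \leq \frac{1}{4\pi} \iint_{\R^3 \times \R^3} \frac{f^{in}(x',v')}{\abs{X(s;0,x,v) - X(s;0,x',v')}^2}\,dx'\,dv',
\end{equation*}
and the task reduces to extracting dispersion from this integrand.

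The key technical step is a Gr\"onwall argument on the characteristic system \eqref{def:chara}. Splitting $\abs{V(\tau)} \leq \abs{v} + \abs{V(\tau)-v}$ in the magnetic term and using $s\norme{B}_\infty \exp(s\norme{B}_\infty) \leq a := 2^{-10}$ for $s \leq T_B$, one deduces
\begin{equation*}
\abs{V(s;0,x,v) - v} \leq Q(t,t)\exp(s\norme{B}_\infty) + a\abs{v}, \qquad \abs{X(s;0,x,v) - x - sv} \leq s\bigl(Q(t,t)\exp(s\norme{B}_\infty) + a\abs{v}\bigr),
\end{equation*}
and consequently, for any two initial data $(x,v),(x',v')$,
\begin{equation*}
\bigl|X(s;0,x,v) - X(s;0,x',v') - (x-x') - s(v-v')\bigr| \leq 2sQ(t,t)\exp(s\norme{B}_\infty) + sa\bigl(\abs{v}+\abs{v'}\bigr).
\end{equation*}
With $a = 2^{-10}$ this produces an effective dispersion $\abs{X(s;0,x,v) - X(s;0,x',v')} \gtrsim \abs{(x-x') + s(v-v')}$ on the region where $\abs{v-v'}$ dominates the magnetic correction. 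Following Pallard, I would then split the $(x',v')$-integral into (i) large velocities $\abs{v'} \geq R$, controlled by the moment bound $\int_{\abs{v'}\geq R}\!\int f^{in}\,dx'\,dv' \leq R^{-k}M_k(0)$; (ii) small relative velocities $\abs{v-v'} \leq r$, estimated via $\norme{f^{in}}_\infty$ and the volume $r^3$; and (iii) the dispersive regime, where the change of variables $v' \mapsto X(s;0,x,v) - X(s;0,x',v')$ has Jacobian close to $s^3$ so that $\abs{z}^{-2}$ is integrable on a ball. Optimizing $r$ and $R$ as functions of $Q(t,t)$, integrating in $s\in[0,t]$, and factorizing the magnetic prefactor $\exp(T\norme{B}_\infty)^{2/5}$ throughout, I expect to obtain a self-improving inequality of the form
\begin{equation*}
Q(t,t) \leq C\exp(T\norme{B}_\infty)^{2/5}\bigl(t^{1/2} + t^{7/5} Q(t,t)^{\alpha}\bigr)
\end{equation*}
for some $\alpha<1$ depending on $k$, which closes into the desired bound by a standard continuity argument.

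The main obstacle I anticipate is the $a\abs{v}$ contribution in the velocity estimate: taken naively, it destroys the uniformity in $(x,v)$ that defines $Q$. It must be absorbed by the integration against $f^{in}$ through the moment $M_k(0)$, which is precisely why the hypothesis $k>2$ (and the auxiliary parameter $\varepsilon\in\intervalleoo{0}{\varepsilon_0}$ with $\varepsilon_0\leq (k-2)/(2k)$ introduced at the start of \cref{sec:prop}) is required. The explicit choice $a = 2^{-10}$ keeps a comfortable numerical margin at each step where a magnetic correction appears, and in particular guarantees that the Jacobian of the change of variables in $v'$ stays uniformly bounded below on the dispersive region, so that the optimal exponents in the decomposition are not deformed by $\norme{B}_\infty$ and the bootstrap closes on $[0,T_B]$.
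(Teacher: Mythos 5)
Your overall strategy (reduce to an integral of $f^{in}$ against $\abs{X(s;0,x,v)-X(s;0,x',v')}^{-2}$, split into regimes, close by a bootstrap) diverges from the paper at the decisive step, and the step where it diverges does not work. The change of variables $v'\mapsto X(s;0,x',v')$ ``with Jacobian close to $s^3$'' cannot be justified from the a priori information available here: the Jacobian involves $\nabla_{v'}X(s;0,x',v')$, whose deviation from $s\,\mathrm{Id}$ is governed by $\int_0^s\norme{\nabla E(\tau)}_\infty d\tau$, a quantity that $Q$ does not control (controlling it is essentially equivalent to already having classical solutions). Likewise, your claimed dispersion $\abs{X(s;0,x,v)-X(s;0,x',v')}\gtrsim\abs{(x-x')+s(v-v')}$ does not follow from the displayed error bound: the term $sa(\abs{v}+\abs{v'})$ is not small compared with $s\abs{v-v'}$ when $v$ and $v'$ are both large and close to each other, and $M_k(0)$ only controls the \emph{measure} of the large-velocity region, not the pointwise degeneracy of the denominator, so it cannot absorb this. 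The paper's proof (Propositions \ref{prop:Q1}--\ref{prop:Q3}, following \cite{S91,P12}) avoids both problems: it never changes variables in $v'$, but instead, for each fixed pair of characteristics, integrates $\abs{X(s)-X_*(s)}^{-2}$ in \emph{time} on the ``ugly'' set, using only the second-order expansion of $Y=X-X_*$ around the minimizer $s_0$ together with the bounds $\abs{Y''}\leq 2(Q+\delta\norme{B}_{\infty}\abs{v-v_*})$ and $\abs{Y'(s_0)}\geq\abs{v-v_*}/2$. The latter is \cref{lem:controlV}, and it is the one place where $T\leq T_B$ and the uniformity of $B$ really enter: because $B=B(t)$, the magnetic term in $(V-V_*)'$ is $(V-V_*)\wedge B(t)$, a \emph{relative} perturbation of $\abs{V-V_*}$ of size $\delta\norme{B}_{\infty}e^{\delta\norme{B}_{\infty}}\leq 2^{-10}$, which is why $\abs{V(s)-V_*(s)}$ stays within a factor $2$ of $\abs{v-v_*}$ on $U$ (and why the argument breaks for $B(t,x)$, cf.\ \cref{rem:Btxwrong}). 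Your estimate, which compares everything to $\abs{v}$ rather than to $\abs{v-v_*}$, loses exactly this structure.

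Two further points. First, the inequality that actually comes out of the splitting is \eqref{ineq:Q}, whose right-hand side contains $\delta Q(t,\delta)^{4/3}$ --- a \emph{superlinear} power of $Q$ produced by the good set via the interpolation bound \eqref{ineqfunctio} --- so there is no self-improving inequality with $Q^{\alpha}$, $\alpha<1$, and no ``standard continuity argument''; the closure requires the dichotomy and the subdivision of $\intervalleff{0}{t}$ into intervals of length $\delta_*\gtrsim(1+M_{2+\varepsilon}(T))^{-1/7}$ carried out in \cref{prop:Q2}. Second, the resulting bound is first expressed in terms of $M_{2+\varepsilon}(T)$, not $M_k(0)$, and a final feedback loop (\cref{prop:Q3}), interpolating $M_{2+\varepsilon}$ between $M_2$ and $M_k$ and using that the exponent $\sigma(\varepsilon,k)=\frac{4k\varepsilon}{7(k-2)}$ is less than one for $\varepsilon<\varepsilon_0$, is needed to land on the stated constant depending only on $M_k(0)$. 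None of these steps is optional, and none is present in your outline.
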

\begin{remark}
	This estimate is the analogous of the estimate (13) in \cite{P12}. In our magnetized framework, we only manage to generalize this result up to the time $T_B$.
\end{remark}

The following section will be devoted to the proof of this proposition, and just like in \cite{P12} the proof is done in three steps which correspond to \cref{prop:Q1}, \cref{prop:Q2} and \cref{prop:Q3}.
\subsection{The case $T \leq T_B$}
\begin{proposition}\label{prop:Q1}
	For any $0 \leq \delta \leq t \leq T \leq T_B$ we have:
	\begin{equation}\label{ineq:Q}
	Q(t,\delta) \leq C\left( \delta %\left(Q(t,\delta)\exp(\delta \norme{B}_{\infty})\right)^\frac{4}{3}
	Q(t,\delta)^\frac{4}{3} +\delta^\frac{1}{2}(1+M_{2+\varepsilon}(T))^\frac{1}{2}\right)
	\end{equation}
\end{proposition}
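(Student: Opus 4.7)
\textbf{Proof proposal for \cref{prop:Q1}.} The plan is to follow Pallard's strategy from \cite{P12}, with the restriction $T \leq T_B$ used precisely to absorb the extra magnetic rotation of the relative velocity between two characteristics. The argument has three main ingredients.

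First, I derive a representation of $E$ along characteristics. Since $f$ is transported by the divergence-free vector field $(v, E+v\wedge B)$, the Lagrangian flow of \eqref{sys:VPwB} is measure-preserving, so expressing $\rho(s,\cdot)$ via the inverse flow gives
\begin{equation*}
E(s, X(s;0,x,v)) = \frac{1}{4\pi}\iint f^{in}(y,w)\,\frac{X(s;0,x,v) - X(s;0,y,w)}{\abs{X(s;0,x,v) - X(s;0,y,w)}^3}\,dy\,dw.
\end{equation*}
Integrating in time on $\intervalleff{t-\delta}{t}$ and applying Fubini, I obtain $Q(t,\delta) \leq \sup_{(x,v)}\iint f^{in}(y,w) I\,dy\,dw$, where $I := \int_{t-\delta}^{t}\abs{\Phi(s)}^{-2}ds$ with $\Phi(s) := X(s;0,x,v) - X(s;0,y,w)$ and $\Phi'(s) = \Psi(s) := V(s;0,x,v) - V(s;0,y,w)$.

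Second, I establish a quasi-constancy of $\Psi$ on $\intervalleff{t-\delta}{t}$. From \eqref{def:chara},
\begin{equation*}
\Psi(s) = \Psi(s_1) + \int_{s_1}^{s}\bigl(E(\tau, X(\tau;0,x,v)) - E(\tau, X(\tau;0,y,w))\bigr)d\tau + \int_{s_1}^{s}\Psi(\tau)\wedge B(\tau)\,d\tau.
\end{equation*}
The first integral is bounded by $2Q(t,\delta)$ since both characteristics are admissible in the supremum defining $Q$. A Grönwall argument on the magnetic term, together with the inequality $\exp(T_B\norme{B}_\infty) - 1 \leq T_B\norme{B}_\infty\exp(T_B\norme{B}_\infty) = 2^{-10}$ coming from \eqref{def:TB}, yields $\abs{\Psi(s) - \Psi(s_1)} \leq C(Q(t,\delta) + 2^{-10}\abs{\Psi(s_1)})$. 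Therefore, if $\abs{\Psi(s_1)} \geq 4Q(t,\delta)$ for some $s_1 \in \intervalleff{t-\delta}{t}$, then $\abs{\Psi(s)} \geq \abs{\Psi(s_1)}/2$ throughout $\intervalleff{t-\delta}{t}$. This is the magnetic analogue of Pallard's dispersion input and allows me to import his pointwise estimate $I \leq C\min\bigl(\delta/d^2,\,1/(\abs{\Psi(s_1)}\,d)\bigr)$, where $d$ denotes the minimum separation of $\Phi$ along an essentially straight trajectory.

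Third, I split the $(y,w)$-integral by introducing a free threshold $R$ and distinguishing $\abs{v-w}\leq R$ from $\abs{v-w}>R$. On the low-velocity region, the trivial bound combined with the a priori estimate $\norme{\rho}_{5/3}\leq C$ from the energy conservation lemma, and the interpolations used in \cite[Section~2]{P12}, produce the self-improving term $\delta\, Q(t,\delta)^{4/3}$; on the high-velocity region, the Chebyshev-type estimate $\iint_{\abs{v-w}>R}f^{in}\,dy\,dw \leq R^{-(2+\varepsilon)}M_{2+\varepsilon}(T)$ together with the fine dispersion bound yields the second term. Optimising in $R$ and taking the supremum over $(x,v)$ gives \eqref{ineq:Q}. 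The main obstacle is exactly the Grönwall step of the second ingredient: the term $\Psi\wedge B$ is \emph{multiplicative} in $\Psi$, and without the restriction $T \leq T_B$ the factor $\exp(T\norme{B}_\infty)$ would destroy the quasi-constancy of $\Psi$ and, with it, Pallard's whole dispersion argument. This is why the estimate is only established up to $T_B$ at this stage, and why the subsequent induction on time slices of length $T_B$ is unavoidable to reach arbitrary $T$.
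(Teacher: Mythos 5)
Your first two ingredients track the paper closely. In particular, your second ingredient is exactly the paper's key new step: \cref{lem:controlV} proves precisely this quasi-constancy (inequalities \eqref{ineq:V} and \eqref{ineq:V-V*}) by the same Gr\"onwall argument on the magnetic term, using \eqref{def:TB} to force $\norme{B}_{\infty}\delta\exp(\delta\norme{B}_{\infty})\leq 2^{-10}$, and your diagnosis of why the restriction $T\leq T_B$ and the subsequent induction are unavoidable is the paper's own. Two caveats: the paper also needs quasi-constancy of $\abs{V(s)}$ itself, not only of the difference $\Psi$, because the spatial cutoff $\Lambda_\varepsilon(s,v)=L(1+\abs{v}^{2+\varepsilon})^{-1}\abs{v-V_*(s)}^{-1}$ carries a weight in $\abs{v}$; and the threshold is taken as $P=2^{10}Q(t,\delta)\exp(\delta\norme{B}_{\infty})$ rather than $4Q(t,\delta)$ (though, as the paper remarks, only ``large enough'' matters).

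The genuine gap is in your third step. The paper, following \cite{P12,S91}, partitions $\intervalleff{t-\delta}{t}\times\R^3\times\R^3$ into \emph{three} sets $G$, $B$, $U$ governed by \emph{two} parameters playing different roles: the good-set velocity threshold $P\sim Q(t,\delta)$ is not optimized (it is forced to be of order $Q$ so that the quasi-constancy lemma applies on its complement, and it yields the term $\delta Q(t,\delta)^{4/3}$ via the convolution inequality \eqref{ineqfunctio}), while the bad-set spatial cutoff $L$ is optimized, its two contributions $\delta L$ (bad set, integrated in $x$ at fixed time against $\norme{f^{in}}_\infty$) and $L^{-1}(1+M_{2+\varepsilon}(T))$ (ugly set) balancing to give $\delta^{1/2}(1+M_{2+\varepsilon}(T))^{1/2}$. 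Your single-threshold split in $\abs{v-w}$ with one optimization cannot produce both terms of \eqref{ineq:Q}, and it omits the bad set altogether: without the spatial cutoff capping the integrand at $\Lambda_\varepsilon^{-2}$, the dispersion bound $I\leq C/(\abs{\Psi}\,d)$ degenerates as $d\to 0$ and its integral in $(y,w)$ is not controlled. Moreover the Chebyshev estimate you invoke, $\iint_{\abs{v-w}>R}f^{in}\,dy\,dw\leq R^{-(2+\varepsilon)}M_{2+\varepsilon}(T)$, is false as stated (take $\abs{v}$ large and $f^{in}$ concentrated near $w=0$: the left-hand side equals $\norme{f^{in}}_1$). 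In the paper, $M_{2+\varepsilon}$ enters through the weight $1+\abs{v}^{2+\varepsilon}$ on the \emph{integration} velocity variable inside $\Lambda_\varepsilon$, integrated against $f(t,\cdot,\cdot)$ in \eqref{ineq:ugly}, not through a tail bound on the relative velocity.
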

\begin{proof} Let $(t,x_*,v_*) \in \intervalleff{0}{T}\times \R^3 \times \R^3$ and set $(X_*,V_*)(s)=(X,V)(s;t,x_*,v_*)$. For any $\delta \in \intervalleff{0}{t}$ we have by definition of $E$
\begin{equation*}
\int_{t-\delta}^{t} \abs{E(s,X(s;t,x_*,v_*))}ds \leq \int_{t-\delta}^{t} \int \frac{\rho(s,x)dx}{4 \pi \abs{x-X_*(s)}^2}ds
\end{equation*}
Our objective in the rest of this section will be to estimate the integral:
\begin{equation}\label{def:I*}
I_*(t,\delta):=\int_{t-\delta}^{t} \int\frac{\rho(s,x)dx}{ \abs{x-X_*(s)}^2}ds=\int_{t-\delta}^{t} \iint\frac{f(s,x,v)dvdx}{ \abs{x-X_*(s)}^2}ds
\end{equation}
Now we will use a procedure that is inspired from \cite{S91} which consists in splitting $\intervalleff{t-\delta}{t}\times \R^3 \times \R^3$ into three parts. Here the partition is slightly different because, following \cite{P12}, we introduce $\varepsilon >0$.
\begin{align*}
&G=\left\{ (s,x,v): \min(\abs{v},\abs{v-V_*(s)}) < P\right\}, \\
&B=\left\{ (s,x,v): \abs{x-X_*(s)} \leq \Lambda_\varepsilon(s,v)\right\}\backslash G, \\
&U=\intervalleff{t-\delta}{t}\times \R^3 \times \R^3 \backslash (G \cup B),
\end{align*}
with 
\begin{equation}\label{def:P}
P=2^{10}Q(t,\delta)\exp(\delta \norme{B}_{\infty}) \text{ and } \Lambda_\varepsilon(s,v)=L(1+\abs{v}^{2+\varepsilon})^{-1}\abs{v-V_*(s)}^{-1}
\end{equation}
and $L>0$ to be fixed later. The main difference here with \cite{P12} is the definition of $P$, because the added magnetic field modifies the evolution of the characteristic in velocity $V(s)$. Furthermore, we take the same numerical constant $2^{10}$ in the definition of $P$ as in \cite{P12} is (in truth this constant just needs to be large enough). Using obvious notations, we write $I_*=I_*^G+I_*^B+I_*^U$. The first two integrals will be more straightforward to estimate than $I_*^U$, which involves the set $U$, which we will call ugly set following \cite{S91}.

The first two contributions $I_*^G,I_*^B$ are treated the same in both magnetized and unmagnetized cases, simply because the modifications made to the sets $G,B$ to take into account the added magnetic field don't change the computations required to estimate $I_*^G$ and $I_*^B$. We succinctly present how to control both integrals following the calculations from \cite{P12}. The first bound is obtained by using a standard functional inequality.

For $\kappa \in L^\infty(\R^3) \cap L^\frac{5}{3}(\R^3)$ we have
\begin{equation}\label{ineqfunctio}
\norme{\kappa \ast \abs{\cdot}^{-2}}_\infty \leq c \norme{\kappa}^\frac{5}{9}_\frac{5}{3} \norme{\kappa}^\frac{4}{9}_\infty,
\end{equation}
with $c$ a numerical constant.

We apply \eqref{ineqfunctio} to the quantity:
\begin{equation}
\rho_G(s,x)=\int_{B(0,P) \cup B(V_*(s),P)} f(s,x,v)dv \leq \rho(s,x),
\end{equation}
which implies the following control on $I_*^G(t,\delta)$:
\begin{equation}\label{ineq:G}
I_*^G(t,\delta)\leq C( \delta P^\frac{4}{3}). %\text{ and } I_*^B(t,\delta)\leq C( \delta L).
\end{equation}
To estimate the contribution on $B$, we first integrate in the space variable using a spherical change of variable.
\begin{align*}
I_*^B(t,\delta) & \leq \int_{t-\delta}^{t} \int_v \int_{\abs{x-X_*(s)} \leq \Lambda_\varepsilon(s,v)} \frac{f(s,x,v)dx}{ \abs{x-X_*(s)}^2} dvds,\\
& \leq \int_{t-\delta}^{t} \int_v \left(4\pi \int_{0}^{\Lambda_\varepsilon(s,v)} \frac{\norme{f^{in}}_\infty}{r^2} r^2 dr\right)dvds,\\
& \leq C \int_{t-\delta}^{t} \int_v \frac{L}{(1+\abs{v}^{2+\varepsilon})\abs{v-V_*(s)}}dvds,\\
& \leq C \int_{t-\delta}^{t} \left(\int_{\abs{v} \leq \abs{v-V_*(s)}} \frac{L}{(1+\abs{v}^{2+\varepsilon})\abs{v}}dv +\int_{\abs{v} > \abs{v-V_*(s)}} \frac{L}{(1+\abs{v-V_*(s)}^{2+\varepsilon})\abs{v-V_*(s)}}dv\right) ds,\\
& \leq C \int_{t-\delta}^{t} \int_v \frac{L}{(1+\abs{v}^{2+\varepsilon})\abs{v}}dvds,\\
& \leq C \int_{t-\delta}^{t} \int_{\R} \frac{L r}{(1+r^{2+\varepsilon})}drds,\\
& \leq C\delta L.
\end{align*}
The last contribution $I_*^U(t,\delta)$ can be written
\begin{equation}
I_*^U(t,\delta)=\int_{t-\delta}^{t} \iint \frac{f(s,x,v)\mathbf{1}_U(s,x,v)}{\abs{x-X_*(s)}^2}dvdxds=\iint \int_{t-\delta}^{t} \frac{\mathbf{1}_U(s,X(s),V(s))}{\abs{X(s)-X_*(s)}^2}dsf(t,x,v)dvdx
\end{equation}
where we have the obvious notation $(X,V)(s)=(X,V)(s;t,x,v)$. Estimating this quantity is difficult and will occupy us for the rest of the proof of \cref{prop:Q1}.

The following lemma is very important in our proof because it highlights why we need to use the induction procedure mentioned above. In the unmagnetized case, we estimate $I_*^U$ by noticing that because of the definition of $U$, the characteristic $V(s)$ stays close to $v$ on $\intervalleff{t-\delta}{t}$ because $v$ is large compared to $P$ and $P$ is much larger $Q(t,\delta)$ which quantifies the total variation of $V(s)$ on $\intervalleff{t-\delta}{t}$. However in the magnetized case, this stays true only under the condition \eqref{def:TB} because if the magnetic field is large than the variations of $V(s)$ on $\intervalleff{t-\delta}{t}$ can also be very large compared to $P$. 
%The idea when we estimate $I_*^U$ is that on the set $U$, since we consider velocities much larger than $P$, the characteristic $V(s)$ stay very close to $v$ on $\intervalleff{t-\delta}{t}$ because the order of 
\begin{lemma}\label{lem:controlV}
Let $s_1 \in \intervalleff{t-\delta}{t}$ such that $(s_1,X(s_1),V(s_1)) \in U$, then for all $s \in \intervalleff{t-\delta}{t}$ we have
\begin{equation}\label{ineq:V}
2^{-1} \abs{v} \leq \abs{V(s)} \leq 2 \abs{v},
\end{equation}
and 
\begin{equation}\label{ineq:V-V*}
2^{-1} \abs{v-v_*} \leq \abs{V(s)-V_*(s)} \leq 2 \abs{v-v_*}.
\end{equation}
\end{lemma}
\begin{proof}
First, because of the definition of $U$, we can write
\begin{equation}\label{ineq:V1P}
\min(\abs{V(s_1)},\abs{V(s_1)-V_*(s_1)})\geq P.
\end{equation} 
Let's start by proving the first bound \eqref{ineq:V}, thanks to \eqref{def:chara} we have for all $s \in \intervalleff{t-\delta}{t}$
\begin{equation*}
V(s)-V(s_1)=\int_{s_1}^{s} E(\tau,X(\tau))d\tau+\int_{s_1}^{s}V(\tau) \wedge B(\tau,X(\tau))d\tau
\end{equation*}
Furthermore, one of the properties of the characteristics is that we have for all $(x,v) \in \R^3 \times \R^3$ and $\tau, t \in \R_+$
\begin{equation*}
X(\tau;t,x,v)=X(\tau;0,X(0;t,x,v),V(0;t,x,v))
\end{equation*}
and also that the function $(x,v) \mapsto \left(X(\tau;t,x,v),V(\tau;t,x,v)\right)$ is a $C^1$-diffeomorphism which means that 
\begin{align*}
\sup \left\{ \int_{t-\delta}^{t} \abs{E(s,X(s;t,x,v))}ds, (x,v) \in \R^3 \times \R^3 \right\}& =\sup \left\{ \int_{t-\delta}^{t} \abs{E(s,X(s;0,x,v))}ds, (x,v) \in \R^3 \times \R^3 \right\}\\
&=Q(t,\delta).
\end{align*}
Thus we can write
\begin{align*}
\abs{V(s)-V(s_1)} \leq Q(t,\delta)+\norme{B}_{\infty}\left(\delta\norme{V(s_1)}+\abs{\int_{s_1}^{s}\abs{V(\tau)-V(s_1)}d\tau}\right)
\end{align*}
which is a Gr\"onwall inequality, so we finally have for all $s \in \intervalleff{t-\delta}{t}$
\begin{equation}\label{ineq:V-V1}
\abs{\abs{V(s)}-\abs{V(s_1)}} \leq \abs{V(s)-V(s_1)} \leq \left(Q(t,\delta)+\norme{B}_{\infty}\delta\abs{V(s_1)}\right)\exp(\delta \norme{B}_{\infty}).
\end{equation}
This last inequality highlights the main difference with the unmagnetized case, indeed when we use $V(s_1)$ as a reference point to quantify the variation of $V(s)$, we see that the added term $\norme{B}_{\infty}\delta\abs{V(s_1)}\exp(\delta \norme{B}_{\infty})$, which is just the added variation of the velocity characteristic resulting from the magnetic field, is potentially unbounded. This is due to the fact that even if $0 \leq \delta \leq T$, $\norme{B}_{\infty}$ is potentially large. This is the reason we introduce the time $T_B$ which depends on the cyclotron frequency $\norme{B}_{\infty}$.

Now using this last inequality, thanks to the relation between $P$ and $Q(t,\delta)$ given in \eqref{def:P}, to \eqref{ineq:V1P}, and to \eqref{def:TB} (because $t\leq T_B$) we have
\begin{align*}
\abs{V(s)} & \leq \abs{V(s_1)}(1+\norme{B}_{\infty}\delta\exp(\delta \norme{B}_{\infty}))+2^{-10}P\\
& \leq \abs{V(s_1)}\left(1+2^{-10}+2^{-10}\right)=\abs{V(s_1)}\left(1+2^{-9}\right)
\end{align*}
and using the same relations but this time for $-\abs{V(s_1)}$ we can write
\begin{align*}
\abs{V(s_1)}\left(1-2^{-10}-2^{-10}\right) & \leq \abs{V(s_1)}\left(1-\norme{B}_{\infty}\delta\exp(\delta \norme{B}_{\infty})\right)-2^{-10}P\\
& \leq \abs{V(s_1)}\left(1-\norme{B}_{\infty}\delta\exp(\delta \norme{B}_{\infty})\right)-Q(t,\delta)\exp(\delta \norme{B}_{\infty})\\
& \leq \abs{V(s)}
\end{align*}
These inequalities are valid for all $s \in \intervalleff{t-\delta}{t}$ and so in particular for $s=t$. And so we can write
\begin{equation}
2^{-1} \abs{V(s)} \leq \abs{V(s)}\frac{1-2^{-9}}{1+2^{-9}} \leq \abs{v} \leq \abs{V(s)}\frac{1+2^{-9}}{1-2^{-9}} \leq 2\abs{V(s)}
\end{equation}
which is equivalent to \eqref{ineq:V}. The proof for \eqref{ineq:V-V*} is very similar so we won't detail it.
\end{proof}
\begin{remark}[The velocity characteristic inequality \eqref{ineq:V-V*} is false when $B:=B(t,x)$]\label{rem:Btxwrong}
	We try to apply the same analysis as above to see if \eqref{ineq:V-V*} is true when $B:=B(t,x)$ is a bounded external magnetic field that also depends on position. Like for \eqref{ineq:V}, we try to write a Gr\"onwall inequality but this time on $Z(s)=\abs{(V(s)-V_*(s))-(V(s_1)-V_*(s_1))}$.
	\begin{align*}
	(V(s)-V_*(s))-(V(s_1)-V_*(s_1))&=\int_{s_1}^{s} E(\tau,X(\tau))d\tau+\int_{s_1}^{s}V(\tau) \wedge B(\tau,X(\tau))d\tau\\
	&-\int_{s_1}^{s} E(\tau,X_*(\tau))d\tau-\int_{s_1}^{s}V_*(\tau) \wedge B(\tau,X_*(\tau))d\tau.
	\end{align*}
	This allows us to write
	\begin{align*}	
	Z(s) &\leq 2 Q(t,\delta) + \abs{\int_{s_1}^{s}\left(V(\tau) \wedge B(\tau,X(\tau))-V_*(\tau) \wedge B(\tau,X_*(\tau))\right)d\tau}\\
	& \leq 2 Q(t,\delta) +\int_{s_1}^{s}\abs{V(\tau)\wedge (B(\tau,X(\tau))-B(\tau,X_*(\tau)))}+\abs{(V_*(\tau)-V(\tau))\wedge B(\tau,X_*(\tau))}d\tau\\
	& \leq 2 Q(t,\delta) + 2\delta \norme{B}_{\infty} 2\abs{V(s_1)}\\
	& + \norme{B}_{\infty}\left(\delta\abs{(V_*(s_1)-V(s_1))}+\int_{s_1}^{s}\abs{(V(\tau)-V_*(\tau))-(V(s_1)-V_*(s_1))}d\tau\right)
	\end{align*}
	where in the second term in the last inequality we used the bound $\abs{V(s)} \leq \abs{V(s_1)}(1+2^{-9}) \leq 2 \abs{V(s_1)}$ that we established just before.
	
	Thus we have our Gr\"onwall inequality on $Z(s)$ which gives us
	\begin{equation}\label{ineq:Z}
	Z(s) \leq (2 Q(t,\delta) + 4\delta \norme{B}_{\infty} \abs{V(s_1)}+\norme{B}_{\infty}\delta\abs{(V_*(s_1)-V(s_1))})\exp(\delta \norme{B}_{\infty}).
	\end{equation}
	We notice the term $4\delta \norme{B}_{\infty} \abs{V(s_1)}$, this means that the variation of $\abs{V-V_*}$ on the ugly set $U$ depends also on $\abs{V}$ and not only on $Q(t,\delta)$ and $\abs{V-V_*}$. In practice, this means we can only obtain the following estimate when $B:=B(t,x)$
	\begin{equation}
	2^{-1} \left(\abs{v-v_*}+\abs{v}\right) \leq \abs{V(s)-V_*(s)}+\abs{V(s)} \leq 2 (\abs{v-v_*}+\abs{v}).
	\end{equation}
	Since the analysis carried out in \cite{P12} by integrating in time to estimate $I_*^U$ is done in a very fine way, the inequalities above are not enough to obtain an analogous result when $B:=B(t,x)$.
\end{remark}
\begin{lemma}
	For any $(x,v) \in \R^6$ we have
	\begin{equation}\label{ineq:1U}
	\int_{t-\delta}^{t} \frac{\mathbf{1}_U(s,X(s),V(s))}{\abs{X(s)-X_*(s)}^2}ds \leq C\left( \frac{1+\abs{v}^{2+\varepsilon}}{L}\right).
	\end{equation}
\end{lemma}
\begin{proof}
If $(s,X(s),V(s)) \notin U$ for all $s \in \intervalleff{t-\delta}{t}$ then the estimate \eqref{ineq:1U} is verified.
Now we assume that there exists $s_1 \in \intervalleff{t-\delta}{t}$ such that $(s_1,X(s_1),V(s_1)) \in U$, then thanks to \cref{lem:controlV} we can write
\begin{equation}
\Lambda_\varepsilon(s,V(s)) \geq L(1+(2\abs{v})^{2+\varepsilon})^{-1}(2\abs{v-v_*})^{-1} \geq 2^{-3-\varepsilon}\Lambda_\varepsilon(t,v)
\end{equation}
and hence
\begin{equation}\label{ineq:1Uh}
\frac{\mathbf{1}_U(s,X(s),V(s))}{\abs{X(s)-X_*(s)}^2} \leq \frac{\mathbf{1}_{\R^3 \backslash B(X_*(s),2^{-3-\varepsilon}\Lambda_\varepsilon(t,v))}(X(s))}{\abs{X(s)-X_*(s)}^2} \leq h(\abs{Y(s)}),
\end{equation}
where $Y(s)=X(s)-X_*(s)$ and $h(u)=\min(\abs{u}^{-2},4^{3+\varepsilon}\Lambda_\varepsilon(t,v)^{-2})$. Since $h$ is a non-increasing function, we look for a lower bound on $\abs{Y(s)}$.

\noindent
For any $s_0 \in \intervalleff{t-\delta}{t}$ we have, thanks to \eqref{ineq:V-V*}
\begin{align*}
\abs{Y(s)} & \geq \abs{Y(s_0)+(s-s_0)Y'(s_0)}-\abs{\int_{s_0}^{s}(s-u)Y''(u)du}\\
& \geq \abs{Y(s_0)+(s-s_0)Y'(s_0)}-2\abs{s-s_0}(Q(t,\delta)+\delta\norme{B}_{\infty}\abs{v-v_*}).
\end{align*}
Now we consider $s=s_0$ that minimizes $\abs{Y(s)}^2$ when $s \in \intervalleff{t-\delta}{t}$, then this implies $(s-s_0)Y(s_0)\cdot Y'(s_0) \geq 0$ and so
\begin{equation}
\abs{Y(s_0)+(s-s_0)Y'(s_0)}^2\geq \abs{Y'(s_0)}^2\abs{s-s_0}^2
\end{equation}
and thanks to \eqref{ineq:V-V*} we get $\abs{Y'(s_0)} \geq 2^{-1}\abs{v-v_*}$ and when we evaluate \eqref{ineq:V-V*} in $s_1$ this also yields $Q(t,\delta) \leq 2^{-9}\abs{v-v_*}\exp(-\delta \norme{B}_{\infty})$ so we have
\begin{align*}
\abs{Y'(s_0)}-2(Q(t,\delta)+\delta\norme{B}_{\infty}\abs{v-v_*}) & \geq \abs{v-v_*}(2^{-1}-2^{-8}\abs{v-v_*}\exp(-\delta \norme{B}_{\infty})\\
&-2\delta\norme{B}_{\infty})\\
& \geq \abs{v-v_*}(2^{-1}-2^{-8}-2\delta\norme{B}_{\infty}).
\end{align*}
We need the quantity $(2^{-1}-2^{-8}-2\delta\norme{B}_{\infty})$ to be strictly positive and so once again we need the condition \eqref{def:TB} for $\delta\norme{B}_{\infty}$ to be small and this inequality to be verified. Now we have
$\abs{Y(s)}\geq \alpha \abs{v-v_*} \abs{s-s_0}$ with $\alpha > 0$. Just as in \cite{P12}, we bring this inequality into \eqref{ineq:1Uh}, integrate with respect to the time variable and estimate the integral as follows to obtain \eqref{ineq:1U}:
\begin{align*}
\int_{t-\delta}^{t}h(\abs{Y(s)})ds &\leq \int_{t-\delta}^{t}h(\alpha \abs{v-v_*} \abs{s-s_0})ds,\\
& \leq \int_{0}^{+\infty} h(\alpha \abs{v-v_*} r)dr,\\
& = (\alpha \abs{v-v_*})^{-1}\int_{0}^{+\infty} h(r)dr,\\
& = (\alpha \abs{v-v_*})^{-1}\left(\int_{0}^{2^{-3-\varepsilon}\Lambda_\varepsilon(t,v)} 4^{3+\varepsilon}\Lambda_\varepsilon(t,v)^{-2} dr+ \int_{2^{-3-\varepsilon}\Lambda_\varepsilon(t,v)}^{+\infty} \frac{1}{r^2}dr\right),\\
& = (\alpha \abs{v-v_*})^{-1}(2^{3+\varepsilon}\Lambda_\varepsilon(t,v)^{-1}+2^{3+\varepsilon}\Lambda_\varepsilon(t,v)^{-1}),\\
& \leq C \left( \frac{1+\abs{v}^{2+\varepsilon}}{L}\right).
\end{align*}
\end{proof}

\vspace{2mm}
\noindent
Now integrating in $x,v$ and using the mass conservation, we finally obtain
\begin{equation}\label{ineq:ugly}
I_*^U(t,\delta) \leq C L^{-1}(1+M_{2+\varepsilon}(T)).
\end{equation}
We gather all the above estimates to conclude
\begin{align*}
I_*(t,\delta) &\leq C( \delta \left(Q(t,\delta)\exp(\delta \norme{B}_{\infty})\right)^\frac{4}{3}+\delta L+L^{-1}(1+M_{2+\varepsilon}(T)))
\\& \leq C( \delta Q(t,\delta)^\frac{4}{3}+\delta L+L^{-1}(1+M_{2+\varepsilon}(T)))
\end{align*}
where the last inequality is justified by the fact that thanks to \eqref{def:TB} we have $\exp(\delta \norme{B}_{\infty}) \leq \exp(g(2^{-10}))$ where $g$ is the inverse of the function $x \mapsto x\exp(x)$ on $\R_+$.
We conclude in the same way as in \cite{P12}, firstly by optimizing the parameter $L$ and then by noticing that the pair $(x_*,v_*)$ is arbitrary so that we have
\begin{equation}
\sup \left\{ I_*(t,\delta), (x_*,v_*) \in \R^3 \times \R^3 \right\} \geq Q(t,\delta).
\end{equation}
Finally, we obtain \eqref{ineq:Q}. \end{proof}

The next two propositions allow us to conclude. The proof of \cref{prop:Q2} is identical to the one in \cite{P12} because it doesn't rely on the characteristics of the system, but rather on real analysis arguments. In an effort of clarity, and also because some arguments of the proof are more detailed in this paper than in \cite{P12}, we place the proof of \cref{prop:Q2} in the appendix.

\begin{proposition}\label{prop:Q2}
	For any $t \in \intervalleff{0}{T}$ with $T \leq T_B$, we have
	\begin{equation}\label{ineq:Q2}
	Q(t,t) \leq C (t^{\frac{1}{2}}+t)(1+M_{2+\varepsilon}(T))^\frac{4}{7}.
	\end{equation}
\end{proposition}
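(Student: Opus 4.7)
The plan is to derive \eqref{ineq:Q2} purely from the self-bounding estimate \eqref{ineq:Q}, by combining a short-time bootstrap in $\delta$ with the subadditivity of $Q$ in its second argument. Writing $A := C\bigl(1+M_{2+\varepsilon}(T)\bigr)^{\frac{1}{2}}$ for the coefficient of $\delta^{\frac{1}{2}}$ in \eqref{ineq:Q}, no further information about the characteristics is needed beyond what is already contained in \cref{prop:Q1}; this is an exercise in real analysis on the scalar inequality
\begin{equation*}
Q(t,\delta) \leq C_1 \delta\, Q(t,\delta)^{\frac{4}{3}} + A \delta^{\frac{1}{2}}.
\end{equation*}

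The first step is to prove a short-time bound of the form $Q(t,\delta) \leq 2 A \delta^{\frac{1}{2}}$, valid for $\delta \in \intervalleff{0}{\min(t,\delta_0)}$, where $\delta_0 := c_0 A^{-\frac{2}{7}}$ with $c_0$ a small numerical constant. The exponent $-2/7$ is dictated by balancing the two terms above: under the tentative assumption $Q(t,\delta) \leq 2A\delta^{\frac{1}{2}}$, the nonlinear contribution $C_1 \delta Q(t,\delta)^{\frac{4}{3}} \leq C_1 (2A)^{\frac{4}{3}}\delta^{\frac{5}{3}}$ is dominated by $A\delta^{\frac{1}{2}}$ precisely when $\delta \leq c_0 A^{-\frac{2}{7}}$, so the self-bound returns $Q(t,\delta) \leq 2 A \delta^{\frac{1}{2}}$. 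Since $\delta \mapsto Q(t,\delta)$ is nondecreasing, continuous (in fact Lipschitz, because $E$ is globally bounded in space-time for the smooth approximate solutions we work with) and vanishes at $\delta = 0$, a standard continuity argument closes the bootstrap.

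The second step is to use the subadditivity
\begin{equation*}
Q(t,t) \leq \sum_{i=1}^{N} Q(t_i,\, t_i - t_{i-1}),
\end{equation*}
valid for any partition $0 = t_0 < t_1 < \cdots < t_N = t$. This is obtained by splitting the integral in \eqref{def:Q} into pieces and invoking inside each piece the diffeomorphism identity already used in the proof of \cref{prop:Q1}, which allows us to replace $X(\cdot;0,\cdot,\cdot)$ by $X(\cdot;t_i,\cdot,\cdot)$ without enlarging the supremum. Taking sub-intervals of length at most $\delta_0$ and $N = \lceil t/\delta_0 \rceil$, the short-time bound yields $Q(t,t) \leq 2 A t \delta_0^{-\frac{1}{2}} + 2 A \delta_0^{\frac{1}{2}}$.

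Finally, substituting $\delta_0 = c_0 A^{-\frac{2}{7}}$ gives $A\delta_0^{-\frac{1}{2}} \leq C(1+M_{2+\varepsilon}(T))^{\frac{4}{7}}$ and $A\delta_0^{\frac{1}{2}} \leq C(1+M_{2+\varepsilon}(T))^{\frac{3}{7}}$. In the regime $t > \delta_0$, the constraint $t > c_0 (1+M_{2+\varepsilon}(T))^{-\frac{1}{7}}$ forces $t^{\frac{1}{2}}(1+M_{2+\varepsilon}(T))^{\frac{1}{7}}$ to be bounded below by a positive constant, which lets one absorb the $A\delta_0^{\frac{1}{2}}$ term into $C t^{\frac{1}{2}}(1+M_{2+\varepsilon}(T))^{\frac{4}{7}}$. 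In the easy regime $t \leq \delta_0$ one applies the short-time bound directly with $\delta = t$ and uses $(1+M_{2+\varepsilon}(T))^{\frac{1}{2}} \leq (1+M_{2+\varepsilon}(T))^{\frac{4}{7}}$. Combining both regimes yields \eqref{ineq:Q2}. The only genuinely delicate step is the continuity/bootstrap closure of Step~1; the rest is algebraic bookkeeping of exponents.
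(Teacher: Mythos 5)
Your argument is correct, and its overall architecture is the same as the paper's: both proofs extract from the self-bounding inequality \eqref{ineq:Q} of \cref{prop:Q1} the critical scale $\delta_0\sim (1+M_{2+\varepsilon}(T))^{-1/7}$ below which the linear-regime bound $Q(t,\delta)\lesssim\bigl(\delta(1+M_{2+\varepsilon}(T))\bigr)^{1/2}$ holds, and then conclude by subadditivity of $Q$ over a partition of $\intervalleff{0}{t}$ into $O(t\delta_0^{-1})$ subintervals, with identical exponent bookkeeping (your $A\delta_0^{-1/2}\lesssim H^{4/7}$ and $A\delta_0^{1/2}\lesssim H^{3/7}$ are exactly the paper's $H^{4/7}$ and $H^{3/7}$ terms, absorbed the same way). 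The one step where you genuinely diverge is the mechanism producing the short-time bound. The paper sets $N_1(t,\delta)=\delta Q(t,\delta)^{4/3}$, $N_2(t,\delta)=(\delta H)^{1/2}$ and argues by dichotomy: either $N_1<N_2$ throughout, or there is a crossover point $\delta_*$ with $N_1=N_2$, at which the two relations $Q(t,\delta_*)=(\delta_*^{-1}H)^{3/8}$ and $Q(t,\delta_*)\lesssim(\delta_*H)^{1/2}$ force $\delta_*\gtrsim H^{-1/7}$ and $Q(t,\delta_*)\lesssim H^{3/7}$; monotonicity of $\delta\mapsto Q(t,\delta)$ then transfers the bound down to $\delta=c_*H^{-1/7}$. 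You instead run a continuity/bootstrap argument propagating $Q(t,\delta)\le 2A\delta^{1/2}$ up to $\delta_0$. The two mechanisms need the same analytic input — continuity of $\delta\mapsto Q(t,\delta)$ for the smooth approximate solutions, which the paper uses implicitly via the intermediate value theorem to produce $\delta_*$ and you use to close the bootstrap — but yours additionally requires choosing $c_0$ so that the nonlinear term is absorbed into $\tfrac12 A\delta^{1/2}$ rather than $A\delta^{1/2}$, giving the strict improvement that makes the bootstrap set open; you flag this correctly and it is easily arranged. The paper's crossover argument avoids that strictness issue at the cost of slightly more opaque algebra at $\delta_*$; the conclusions and the dependence of the constants are the same either way. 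One minor remark: the subadditivity $Q(t,t)\le\sum_i Q(t_i,t_i-t_{i-1})$ follows directly from the definition \eqref{def:Q} (the base time there is already $0$), so the appeal to the diffeomorphism identity is harmless but not needed at that point.
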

Now we state the last result necessary in the proof of \cref{prop:TB}.
\begin{proposition}\label{prop:Q3}
	There exists $\tau(\varepsilon,k)>0$ such that for any $t \in \intervalleff{0}{T}$ we have
	\begin{equation}\label{ineq:Q3}
	Q(t,t) \leq  C(1+M_k(0))^{\tau(\varepsilon,k)}\exp(T\norme{B}_{\infty})^{\frac{2}{5}}(T^\frac{1}{2}+T^\frac{7}{5}).
	\end{equation}
\end{proposition}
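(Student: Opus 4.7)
The plan is to close the estimate on $N(T)$ by combining Proposition~\ref{prop:Q2} with the interpolation bound for $M_{2+\varepsilon}$ and the Gr\"onwall-type control of $M_k$ coming from Remark~\ref{rem:moment}. From Proposition~\ref{prop:Q2}, taking the supremum over $t\in\intervalleff{0}{T}$ gives
\begin{equation*}
N(T) \leq C(T^{1/2}+T)(1+M_{2+\varepsilon}(T))^{4/7}.
\end{equation*}
The goal is then to bound $M_{2+\varepsilon}(T)$ in terms of $M_k(0)$ and of $N(T)$ itself, with an exponent on $N(T)$ strictly less than $1$, and finally to solve the resulting self-consistent inequality.

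First I would interpolate $M_{2+\varepsilon}$ between $M_2$ and $M_k$. H\"older's inequality applied to the measure $f(T,x,v)\,dx\,dv$ with the decomposition $2+\varepsilon = (1-\theta)\cdot 2+\theta\cdot k$, where $\theta = \varepsilon/(k-2)$, yields
\begin{equation*}
M_{2+\varepsilon}(T) \leq M_2(T)^{1-\theta} \, M_k(T)^{\theta}.
\end{equation*}
The lemma following \eqref{def:energie} gives $M_2(T)\leq C_1$, so $M_{2+\varepsilon}(T) \leq C\, M_k(T)^{\varepsilon/(k-2)}$. Then Remark~\ref{rem:moment} provides
\begin{equation*}
M_k(T) \leq C(k)\exp(kT\norme{B}_{\infty})\left(M_k(0)+N(T)^{k}\norme{f^{in}}_1\right),
\end{equation*}
and substituting into the estimate from Proposition~\ref{prop:Q2}, together with the elementary inequalities $(a+b)^{\alpha}\leq C_\alpha(a^\alpha+b^\alpha)$, produces an inequality of the form
\begin{equation*}
N(T) \leq C(T^{1/2}+T)\exp(T\norme{B}_{\infty})^{4k\varepsilon/(7(k-2))}\bigl(1+M_k(0)\bigr)^{4\varepsilon/(7(k-2))}\bigl(1+N(T)\bigr)^{4k\varepsilon/(7(k-2))}.
\end{equation*}

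The crucial point is the smallness of $\varepsilon$. Since we assumed $\varepsilon<\varepsilon_0\leq (k-2)/(2k)$, the exponent on $N(T)$ satisfies
\begin{equation*}
\beta := \frac{4k\varepsilon}{7(k-2)} < \frac{2}{7} < 1.
\end{equation*}
An inequality of the form $N \leq A + B\,N^{\beta}$ with $\beta<1$ is solved by the standard dichotomy: either $N\leq 2A$, or else $N^{1-\beta}\leq 2B$, so that in any case $N \leq 2A + (2B)^{1/(1-\beta)}$. Pushing $\varepsilon$ close enough to $\varepsilon_0$ makes $1/(1-\beta)$ close to $7/5$, which is where the exponents $7/5$ and $2/5$ in the statement come from; a straightforward bookkeeping on the remaining factors (using $(T^{1/2}+T)^{7/5}\leq C(T^{1/2}+T^{7/5})$ for $T\leq T_B$) then yields the claimed bound with some exponent $\tau(\varepsilon,k)>0$ on $(1+M_k(0))$.

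The main obstacle here is not any single analytic step, all of which are elementary, but rather the exponent bookkeeping: one has to pick $\varepsilon$ carefully (close to $(k-2)/(2k)$) and track how the exponents combine through H\"older, Remark~\ref{rem:moment}, and the inversion of the self-consistent inequality, while verifying that the exponent of $\exp(T\norme{B}_{\infty})$ does not exceed $2/5$ and that the power of $T$ does not exceed $7/5$. Once this is done, the fact that $\beta<1$ guarantees the absorption of $N(T)$ into the left-hand side, which is precisely the mechanism that closes the estimate and produces \eqref{ineq:Q3}.
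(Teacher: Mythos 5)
Your proposal is correct and follows essentially the same route as the paper: Proposition~\ref{prop:Q2}, the H\"older interpolation $M_{2+\varepsilon}(T)\leq C M_k(T)^{\varepsilon/(k-2)}$, the Gr\"onwall bound $M_k(T)\leq C\exp(kT\norme{B}_{\infty})(M_k(0)+N(T)^k)$ from Remark~\ref{rem:moment}, and absorption of the sublinear power thanks to $\varepsilon<\varepsilon_0$. The only (immaterial) difference is that the paper closes the self-consistent inequality on $M_{2+\varepsilon}(T)$ with exponent $\sigma(\varepsilon,k)\leq 2/7$ and then substitutes back into \eqref{ineq:Q2}, whereas you close it directly on $N(T)$; also, the exponents $2/5$ and $7/5$ come simply from the uniform bounds $\sigma\leq 2/7$ and $1/(1-\beta)\leq 7/5$ valid for all $\varepsilon<\varepsilon_0$, not from pushing $\varepsilon$ close to $\varepsilon_0$.
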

\begin{remark}
	We notice that we obtain the same estimate \eqref{ineq:Q3} as in \cite{P12} when the magnetic field $B$ is zero.
\end{remark}
\begin{proof}
Using the same argument as in \eqref{rem:moment}, we can write
\begin{equation}\label{ineq:Mk}
M_k(t) \leq 2^{k-1}\exp(kt\norme{B}_{\infty})\left(M_k(0)+N(T)^k \norme{f^{in}}_1\right)
\end{equation}
with $N(T)$ defined in \cref{theo:main}.

Now to obtain the desired estimate we have to bound $M_{2+\varepsilon}(t)$, which we manage with the H\"older inequality. Thus for any $t\in \intervalleff{0}{T}$ we have
\begin{equation}
\iint \abs{v}^{2+\varepsilon} f(t,x,v) dx dv \leq \left(\iint \abs{v}^{2} f(t,x,v) dx dv\right)^{\frac{2+\varepsilon-k}{2-k}}\left(\iint \abs{v}^{k} f(t,x,v) dx dv\right)^{\frac{\varepsilon}{k-2}}.
\end{equation}
With the conservation of the energy $E(t)$, this H\"older inequality implies that
\begin{equation}
M_{2+\varepsilon}(T) \leq C M_k(T)^{\frac{\varepsilon}{k-2}}
\end{equation}
and bringing this inequality into \eqref{ineq:Mk} it yields
\begin{equation}
M_{2+\varepsilon}(T) \leq C \exp({\frac{k\varepsilon}{k-2}}T\norme{B}_{\infty})\left(M_k(0)+N(T)^k \norme{f^{in}}_1\right)^{\frac{\varepsilon}{k-2}}.
\end{equation}
Now thanks to \eqref{ineq:Q2} we can deduce
\begin{align*}
M_{2+\varepsilon}(T) & \leq C \exp({\frac{k\varepsilon}{k-2}}T\norme{B}_{\infty})\left(M_k(0)+N(T)^k \norme{f^{in}}_1\right)^{\frac{\varepsilon}{k-2}}\\
& \leq C \exp({\frac{k\varepsilon}{k-2}}T\norme{B}_{\infty})\left(M_k(0)+(T^{\frac{1}{2}}+T)^k(1+M_{2+\varepsilon}(T))^\frac{4k}{7} \norme{f^{in}}_1\right)^{\frac{\varepsilon}{k-2}}\\
& \leq C \exp({\frac{k\varepsilon}{k-2}}T\norme{B}_{\infty})(1+M_k(0))^{\frac{\varepsilon}{k-2}}(T^{\frac{1}{2}}+T)^{\frac{k\varepsilon}{k-2}}(1+M_{2+\varepsilon}(T))^\frac{4k\varepsilon}{7(k-2)}
\end{align*}
Like in \cite{P12}, we write $\sigma(\varepsilon,k)=\frac{4k\varepsilon}{7(k-2)}$ and notice that if we take $\varepsilon$ small enough we have $\sigma(\varepsilon,k) \in \intervalleoo{0}{1}$. More precisely, if $\varepsilon < \varepsilon_0$ then $\sigma(\varepsilon,k) \leq \frac{2}{7}, \frac{k\varepsilon}{k-2} \leq \frac{1}{2}$ and $\frac{\varepsilon}{k-2} \leq \frac{1}{2k}$ and we find
\begin{equation}
M_{2+\varepsilon}(T) \leq C(1+M_k(0))^{\frac{1}{2k}}\exp(T\norme{B}_{\infty})^{\frac{1}{2}}(1+T)^{\frac{1}{2}}(1+M_{2+\varepsilon}(T))^\frac{2}{7},
\end{equation}
where we used $T^\frac{1}{2}+T \leq 2(1+T)$. Ignoring the constant C in the above inequality, the right-hand side term is larger than 1 which implies
\begin{equation}
(1+M_{2+\varepsilon}(T))^\frac{5}{7}\leq C(1+M_k(0))^{\frac{1}{2k}}\exp(T\norme{B}_{\infty})^{\frac{1}{2}}(1+T)^{\frac{1}{2}}
\end{equation}
which finally yields
\begin{equation}
1+M_{2+\varepsilon}(T) \leq C (1+M_k(0))^{\frac{7}{10k}}\exp(T\norme{B}_{\infty})^{\frac{7}{10}}(1+T)^{\frac{7}{10}}.
\end{equation}

\begin{comment}
Finally this yields
\begin{equation}
M_{2+\varepsilon}(T) \lesssim (1+M_k(0))^{\frac{1}{k}}\exp(T\norme{B}_{\infty})^{\frac{7}{10}}(1+T)^{\frac{7}{10}}.
\end{equation}
because, when we look at the function $h(x)=\beta^{\frac{1}{2k}}\gamma^{\frac{1}{2}}  (1+x)^\frac{2}{7}$ with $\beta=1+M_k(0)$ and $\gamma=\exp(T\norme{B}_{\infty})(1+T)$, since $\beta,\gamma \geq 1$ we have that
\begin{align*}
h(\beta^{\frac{1}{k}}\gamma^{\frac{7}{10}}) &\leq \beta^{\frac{1}{2k}}\gamma^{\frac{1}{2}}(2\beta^{\frac{1}{k}}\gamma^{\frac{7}{10}})^\frac{2}{7}\\
& \leq 2^\frac{2}{7} \beta^{\frac{1}{2k}+\frac{2}{7k}}\gamma^{\frac{7}{10}}\\
& \leq 2^\frac{2}{7} \beta^{\frac{1}{k}}\gamma^{\frac{7}{10}}
\end{align*}
\end{comment}

Then, using \eqref{ineq:Q2} again we deduce
\begin{align*}
Q(t,t) &\leq C (T^\frac{1}{2}+T)(1+M_k(0))^{\frac{2}{5k}}\exp(T\norme{B}_{\infty})^{\frac{2}{5}}(1+T)^{\frac{2}{5}}\\
%& \leq C (1+M_k(0))^{\frac{4}{7k}}(T^\frac{1}{2}+T)(1+T)^{\frac{7}{10}}\exp(T\norme{B}_{\infty})^{\frac{7}{10}}\\
& \leq C (1+M_k(0))^{\frac{2}{5k}}\exp(T\norme{B}_{\infty})^{\frac{2}{5}}(T^\frac{1}{2}+T^\frac{7}{5}).
\end{align*}
This concludes the proof of \cref{prop:Q3} and \cref{prop:TB}. \end{proof}
\subsection{The case $T \geq T_B$}
We conclude the proof of \eqref{ineq:mainQ} by showing that $Q(t,t)$ is bounded for all time.
\begin{proposition}\label{prop:allT}
	The inequality \eqref{ineq:mainQ} is valid for all $T\geq T_B$.
\end{proposition}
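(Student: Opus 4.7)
The plan is to partition $\intervalleff{0}{T}$ into $N := \lceil T/T_B \rceil$ consecutive sub-intervals of length at most $T_B$, set $T_n := \min(nT_B, T)$ for $0 \leq n \leq N$, and iterate \cref{prop:TB} on each sub-interval $\intervalleff{T_n}{T_{n+1}}$. On the $n$-th sub-interval, the Vlasov--Poisson system is viewed as a Cauchy problem starting at time $T_n$ with data $f(T_n, \cdot, \cdot)$ and shifted magnetic field $\tilde{B}(\tau) := B(T_n + \tau)$.

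Three observations justify this restart. First, the quantities appearing in \eqref{conserved} are exactly preserved along the flow, so they remain the same ``base constants'' for every application of \cref{prop:TB}. Second, $\norme{\tilde{B}}_{L^\infty(\intervalleff{0}{T_B})} \leq \norme{B}_{\infty}$, so the length $T_B$ determined by \eqref{def:TB} continues to enforce the smallness condition on each sub-interval. Third, the group property of the characteristic flow yields $X(s; 0, x, v) = X(s; T_n, \tilde{x}, \tilde{v})$ with $(\tilde{x}, \tilde{v}) = (X(T_n; 0, x, v), V(T_n; 0, x, v))$; since $(x,v) \mapsto (\tilde{x}, \tilde{v})$ is the $C^1$-diffeomorphism already invoked in \cref{lem:controlV}, the supremum over $(x,v) \in \R^3 \times \R^3$ of $\int_{T_n}^{t} \abs{E(s, X(s;0,x,v))}\,ds$ coincides with the corresponding $Q$-quantity of the shifted problem, to which \cref{prop:TB} applies directly.

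Writing $M^{(n)} := M_{k_0}(T_n)$ and letting $C_n$ denote the bound produced by \cref{prop:TB} on the $n$-th sub-interval, the Gr\"onwall step from \cref{rem:moment} applied between $T_n$ and $T_{n+1}$ yields $M^{(n+1)} \leq K(M^{(n)} + C_n^{k_0})$ for a constant $K$ depending only on $k_0, \norme{f^{in}}_1$, and $\norme{B}_\infty T_B$. Meanwhile \cref{prop:TB} provides $C_n \leq \tilde{C}(1 + M^{(n)})^{2/(5k_0)}$, so $C_n^{k_0} \leq \tilde{C}^{k_0}(1 + M^{(n)})^{2/5} \leq \tilde{C}^{k_0}(1 + M^{(n)})$, and the resulting recurrence on $M^{(n)}$ is linear, hence $M^{(n)}$ and $C_n$ stay bounded after the finite number $N$ of iterations (which depends only on $T$ and $\norme{B}_\infty$). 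Summing over sub-intervals gives $Q(t,t) \leq \sum_{n=0}^{N-1} C_n$ for any $t \leq T$, which is exactly \eqref{ineq:mainQ} with a constant depending only on the quantities in \eqref{constants}. The only delicate point is the reparametrization of characteristics in the third observation above; the remaining steps are elementary bookkeeping on the size of the constants across the $N$ iterations.
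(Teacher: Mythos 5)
Your proof is correct and follows essentially the same route as the paper's: partition $\intervalleff{0}{T}$ into intervals of length $T_B$, iterate \cref{prop:TB} on each one using the conservation of the quantities in \eqref{conserved}, close the loop with the implication $M_k(pT_B)$ bounded $\Rightarrow Q$ bounded on the next interval $\Rightarrow M_k((p+1)T_B)$ bounded, and sum the per-interval contributions to bound $Q(t,t)$. Your explicit linear recurrence $M^{(n+1)} \leq K(M^{(n)} + C_n^{k_0})$ with $C_n^{k_0} \lesssim (1+M^{(n)})^{2/5}$ merely spells out what the paper calls an ``immediate induction.''
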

\begin{proof} For all $t\in \intervalleff{0}{T}$, we write $t=nT_B+t_r$ with $n \in \NN$ and $t_r \in \intervallefo{0}{T_B}$. Since the constant $C$ in \cref{prop:TB} depends only on $T,k,\norme{B}_{\infty},\norme{f^{in}}_1,\norme{f^{in}}_\infty,\mathcal{E}(0)$ and $M_k(0)$, we can reiterate the procedure on any time interval $I_p=\intervalleff{pT_B}{(p+1)T_B}$. Indeed, $T,k$ and $\norme{B}_{\infty}$ are constants $\norme{f(t)}_1$ and $\norme{f(t)}_\infty$ are conserved in time and the energy $E(t)$ is bounded. This means we can write
\begin{align*}
Q(t,t) &\leq \sum_{p=0}^{n-1}Q((p+1)T_B,T_B)+Q(t,t_r)\\
& \leq C \sum_{p=0}^{n} (1+M_k(pT_B))^{\frac{2}{5k}}\exp(T_B\norme{B}_{\infty})^{\frac{2}{5}}(T_B^\frac{1}{2}+T_B^\frac{7}{5}).
\end{align*}
Furthermore, we can show by an immediate induction that for all $p\in \NN$ with $p\leq n$, $M_k(pT_B)$ is bounded such that
\begin{equation}
M_k(pT_B) \leq C_p(k, \norme{B}_{\infty}, \mathcal{E}(0), \norme{f^{in}}_1, \norme{f^{in}}_\infty,M_k(0)).
\end{equation}
This is just because $M_k(pT_B) \leq C_1 \Rightarrow Q((p+1)T_B,T_B) \leq C_2 \Rightarrow M_k((p+1)T_B) \leq C_3$ with $C_1,C_2,C_3$ depending on \eqref{constants}.
This concludes the proof of \cref{prop:allT} and \cref{theo:main}. 

\end{proof}
\subsection{Propagation of velocity moments in the periodic case}
We choose $f^{in}$ as in \cref{theo:mainp} with $k>3$ and  consider $T>0$ which satisfies $T\leq T_B$ with $T_B$ given by \eqref{def:TB} like in the previous section. The case $T \geq T_B$ is obtained exactly like in \cref{prop:allT}.

Following Chen and Chen's improvement \cite{CC19} of Pallard's proof \cite{P12}, where they manage to improve the minimal order of the velocity moments that propagate from $\frac{14}{3}$ to $3$, we prove the following estimate on $Q(t,t)$.

\begin{proposition}\label{prop:Q1p}
	For any $0\leq \delta \leq t \leq T \leq T_B$ and $\varepsilon>0$ we have
	\begin{equation}
	Q(t,\delta) \leq C \left(\delta Q(t,\delta)^\frac{11}{6}+\delta Q(t,\delta)^\frac{1}{2}(1+M_{3+\varepsilon}(T))^\frac{1}{2}+\delta^\frac{1}{2}(1+M_{3+\varepsilon}(T))^\frac{1}{2}\right).
	\end{equation}
\end{proposition}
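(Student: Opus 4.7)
The strategy mirrors the proof of \cref{prop:Q1}, adapted to the periodic setting and refined to incorporate the higher moment $M_{3+\varepsilon}(T)$ in the spirit of Chen--Chen \cite{CC19}. Fix $(t,x_*,v_*) \in \intervalleff{0}{T}\times \mathbb{T}^3 \times \R^3$ and set $(X_*,V_*)(s)=(X,V)(s;t,x_*,v_*)$. On $\mathbb{T}^3$, the Green function of $-\Delta$ decomposes as $\mathcal{G}_3^{per}(x)= \frac{1}{4\pi|x|}+H(x)$ for $|x|$ smaller than half the period, with $H\in W^{2,\infty}(\mathbb{T}^3)$; the smooth part $H$ contributes a bounded term to $E$, which adds at most $C\delta$ to $Q(t,\delta)$ and can be absorbed. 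It therefore suffices to bound
\begin{equation*}
I_*(t,\delta):= \int_{t-\delta}^{t}\iint_{\mathbb{T}^3\times \R^3} \frac{f(s,x,v)}{|x-X_*(s)|^2}\,dv\,dx\,ds.
\end{equation*}

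Following the partition used for \cref{prop:Q1}, I would split $\intervalleff{t-\delta}{t}\times \mathbb{T}^3\times \R^3$ into Good, Bad and Ugly sets using the \emph{same} thresholds as in the full space, namely
\begin{equation*}
P=2^{10}Q(t,\delta)\exp(\delta\norme{B}_\infty),\qquad \Lambda_\varepsilon(s,v)=L(1+|v|^{3+\varepsilon})^{-1}|v-V_*(s)|^{-1},
\end{equation*}
the key change being that the weight $(1+|v|^{2+\varepsilon})^{-1}$ used in \cref{prop:Q1} is replaced by $(1+|v|^{3+\varepsilon})^{-1}$ so that the Bad and Ugly bounds will involve $M_{3+\varepsilon}(T)$ instead of $M_{2+\varepsilon}(T)$. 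Since $B$ depends only on time, \cref{lem:controlV} applies verbatim and the Ugly estimate proceeds unchanged, yielding $I_*^U(t,\delta)\leq CL^{-1}(1+M_{3+\varepsilon}(T))$. Similarly, the Bad estimate via spherical integration in $x$ followed by an integration in $v$ with the new weight gives $I_*^B(t,\delta)\leq C\delta L$.

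The key modification is in the Good term $I_*^G$: the stronger assumption $k>3$ gives the extra Lebesgue bound $\|\rho\|_{(6+\varepsilon)/3}\leq C(1+M_{3+\varepsilon}(T))^{3/(6+\varepsilon)}$ by interpolating $f\leq\|f^{in}\|_\infty$ against $M_{3+\varepsilon}$, in addition to the energy bound $\|\rho\|_{5/3}\leq C$. To estimate $\|\rho_G * |\cdot|^{-2}\|_\infty$ in the periodic setting, I would truncate the kernel at a radius $R$, using the trivial bound $\|\rho_G\|_\infty\leq C P^3$ on the short-range part and the higher-order Lebesgue bound on $\rho$ on the long-range part (since the kernel $|x|^{-2}\chi_{|x|\geq R}$ is in $L^{q'}$ with $q$ the dual of $(6+\varepsilon)/3$). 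Optimizing in $R$ produces two competing contributions, one scaling purely with $P$ and one pairing $P^{1/2}$ with $(1+M_{3+\varepsilon})^{1/2}$; after multiplication by $\delta$ and expressing $P$ in terms of $Q$, these yield
\begin{equation*}
I_*^G(t,\delta)\leq C\bigl(\delta Q(t,\delta)^{11/6}+\delta Q(t,\delta)^{1/2}(1+M_{3+\varepsilon}(T))^{1/2}\bigr),
\end{equation*}
which encodes the worse dispersion on the torus (the full-space exponent $4/3$ being strictly less than $11/6$). Summing $I_*^G+I_*^B+I_*^U$, optimizing $L$ to balance the Bad and Ugly contributions (so that $\delta L\sim L^{-1}(1+M_{3+\varepsilon}(T))$, giving $L\sim \delta^{-1/2}(1+M_{3+\varepsilon}(T))^{1/2}$), and taking the supremum over $(x_*,v_*)$ delivers the claim.

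The main obstacle is the Good set estimate in Step 3: one must perform the kernel splitting in a way that truly exploits the higher-moment information without losing the $1/|x|^2$ local singularity, and the exponents $11/6$ and $1/2$ must come out exactly so that the subsequent Young-type absorption argument (of the form $x\leq Ax^{11/6}+\cdots$ with small $A$) yields propagation of moments for $k>3$ and no smaller; any slack here would force a strictly larger threshold than $3$. The remaining constraint $T\leq T_B$ enters only through \cref{lem:controlV}, exactly as in the full space proof, so the induction in \cref{prop:allT} will extend the bound to arbitrary $T$.
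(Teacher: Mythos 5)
There is a genuine gap, and it is located exactly where you claim nothing changes. The paper's proof (following Chen--Chen) keeps the Good and Bad estimates \emph{exactly} as in the full-space case and places the entire difficulty of the torus in the Ugly set: because a fast characteristic can wrap around $\mathbb{T}^3$ and re-approach $X_*(s)$ repeatedly during $\intervalleff{t-\delta}{t}$, the contribution $I_*^U$ becomes a sum over lattice translates $\alpha\in A_{x,v}\subset\Z^3$, and one must show $\text{\rm card}\, A_{x,v}\lesssim \abs{v-v_*}$ and control the resulting sum. Your assertion that ``the Ugly estimate proceeds unchanged, yielding $I_*^U\leq CL^{-1}(1+M_{3+\varepsilon}(T))$'' is precisely the step that fails on the torus. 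Absorbing the extra factor $\abs{v-v_*}$ from the cardinality of $A_{x,v}$ is what forces the choice $\Lambda_\varepsilon(s,v)=L(1+\abs{v}^{3+\varepsilon})^{-1}$ \emph{without} the factor $\abs{v-V_*(s)}^{-1}$ (which you retain), and it yields the bound $I_*^U\lesssim L^{-1}(1+M_{3+\varepsilon}(T))\left(\delta+Q(t,\delta)^{-1}\right)$. The additional $Q(t,\delta)^{-1}$ in this Ugly bound -- obtained by using that $\abs{v-v_*}\gtrsim P\gtrsim Q(t,\delta)$ on $U$ -- is what, after optimizing $L$ against the Bad term $\delta L$, produces the terms $\delta Q^{\frac{11}{6}}$ and $\delta Q^{\frac12}(1+M_{3+\varepsilon})^{\frac12}$ in the statement. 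It is the recurrence of characteristics, not the Good set, that encodes the weaker dispersion of the torus.

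Relatedly, your proposed modification of the Good set does not produce the claimed exponents and is not needed. With $\norme{\rho_G}_\infty\lesssim P^3$ on the short-range part of the kernel and $\norme{\rho}_{2+\varepsilon/3}\lesssim (1+M_{3+\varepsilon})^{3/(6+\varepsilon)}$ on the long-range part, the two contributions scale like $P^3R$ and $(1+M_{3+\varepsilon})^{1/2}R^{-1/2}$, and optimizing in $R$ gives $P\,(1+M_{3+\varepsilon})^{1/3}$ -- not $P^{11/6}+P^{1/2}(1+M_{3+\varepsilon})^{1/2}$; moreover, replacing $\rho_G$ by $\rho$ in the long-range piece discards the velocity truncation $\abs{v}\leq P$ that is the whole point of the Good set. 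The paper simply reuses the full-space bound $I_*^G\lesssim \delta P^{4/3}$. Your treatment of the periodic Green function, the use of \cref{lem:controlV} (restated as \cref{lem:controlVp}), and the role of the constraint $T\leq T_B$ are all fine, but without the lattice-translate analysis in the Ugly set the proof does not go through and the exponents $\frac{11}{6}$ and $\frac12$ cannot be recovered.
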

\begin{proof}
To prove this estimate, following \cite{CC19,P12} and like in the previous section we decompose the phase space in three parts. In our framework with an added magnetic field, we modify the partition from \cite{CC19} like in the previous section. The sets $G,B,U$ are given by
\begin{align*}
&G:=\left\{ (s,x,v): \min(\abs{v},\abs{v-V_*(s)}) < P\right\}, \\
&B:=\left\{ (s,x,v): \abs{x-X_*(s)} \leq \Lambda_\varepsilon(s,v)\right\}\backslash G, \\
&U:=\Omega \backslash (G \cup B),
\end{align*}
where $P=12 Q(t,\delta)\exp(\delta \norme{B}_{\infty})$ is modified compared to \cite{CC19}, $\Lambda_\varepsilon(s,v)=L(1+\abs{v}^{3+\varepsilon})^{-1}$ and $L$ to be fixed like in the previous section after optimizing the final estimate. We recall that the main difference with the full space framework is with the estimate of $I_*^U$, because from the periodicity in $x$ we get
\begin{equation} 
I_*^U(t,\delta)=\iint \sum_{\alpha \in A_{x,v}}\int_{t-\delta}^{t} \frac{\mathbf{1}_U(s,X(s;t,x+\alpha,v),V(s;t,x+\alpha,v))}{\abs{X(s;t,x+\alpha,v)-X_*(s)}^2}ds f(t,x,v)dvdx,
\end{equation}
where 
\begin{equation}
A_{x,v}=\{\alpha \in \Z^3 \colon \exists s \in \intervalleff{t-\delta}{t} \, (s,X(s;t,x+\alpha,v),V(s;t,x+\alpha,v)) \in U\}.
\end{equation}
Following \cite{BR91}, we can show that $A_{x,v}$ is a finite set and more precisely that
\begin{equation}
\text{\rm card}\, A_{x,v} \lesssim \abs{v-v_*}.
\end{equation}
Furthermore, by using the same analysis as in the previous section, we can show the same lemma on the velocity characteristic.
\begin{lemma}\label{lem:controlVp}
	Let $s_1 \in \intervalleff{t-\delta}{t}$ such that $(s_1,X(s_1),V(s_1)) \in U$, then for all $s \in \intervalleff{t-\delta}{t}$ we have
	\begin{equation}\label{ineq:V-V*p}
	2^{-1} \abs{v-v_*} \leq \abs{V(s)-V_*(s)} \leq 2 \abs{v-v_*}.
	\end{equation}
\end{lemma}
This estimate allows us to proceed like in \cite{CC19} to obtain the following contribution of the ugly set $I_*^U$
\begin{equation}
\sum_{\alpha \in A_{x,v}}\int_{t-\delta}^{t} \frac{\mathbf{1}_U(s,X(s;t,x+\alpha,v),V(s;t,x+\alpha,v))}{\abs{X(s;t,x+\alpha,v)-X_*(s)}^2}ds \lesssim L^{-1}(1+\abs{v}^{3+\varepsilon})\left(\delta+\frac{1}{Q(t,\delta)}\right)
\end{equation}
which explains why we have an order $3+\varepsilon$ in the estimate of \cref{prop:Q1p}. This last estimate concludes the proof of \cref{prop:Q1p} because we can control the contributions of the two other sets exactly like in the full space case.
\end{proof}
From \cref{prop:Q1p}, we prove \cref{theo:mainp} by adapting the rest of the analysis from \cite{CC19} in the same way as in the previous section.

\section{Proofs regarding uniqueness}\label{sec:uni}
The subsections \ref{sec:uniMiot1}, \ref{sec:uniMiot2} and \ref{sec:uniMiot3} will be devoted to the proof of \cref{theo:uni} and subsection \ref{sec:uniloeper} will be devoted to the proof of \cref{theo:uniloeper}.
In this section, we shall denote by $C$ a constant that can change from one line to another but that only depends on
\begin{equation}\label{conserveduni}
\mathcal{E}(0), \norme{f^{in}}_1, \norme{f^{in}}_\infty, T, \iint \abs{v}^m f^{in}.
\end{equation}
\subsection{Proof of the estimate on the $L^p$ norms of $\rho$ \eqref{ineq:uni}}\label{sec:uniMiot1}
We consider $f^{in}$ that satisfies the assumptions of \cref{theo:uni} and let $f$ be the solution given by \cref{theo:main} with initial data $f^{in}$. By construction, we have
propagation of moments:
\begin{equation}\label{ineq:propuni}
\underset{t \in \intervalleff{0}{T}}{\sup}\iint_{\R^3 \times \R^3} \abs{v}^m f(t,x,v)dxdv < +\infty.
\end{equation}
Now thanks to a classical velocity moment inequality, we show how to control the $L^p$ norms of the charge density with velocity moments, this inequality is given by
\begin{equation}\label{ineq:rhoMk}
\norme{\rho(t)}_\frac{k+3}{3} \leq C \norme{f(t)}_\infty^\frac{k}{k+3}M_k(t)^\frac{3}{k+3}.
\end{equation}
with $C$ independent of $k$. Since we want $\rho$ to verify \eqref{ineq:uni}, this means that we need to prove
\begin{equation}
\forall k \geq 1, \underset{t \in \intervalleff{0}{T}}{\sup}(\norme{f(t)}_\infty^\frac{k}{k+3}M_k(t)^\frac{3}{k+3}) \leq Ck.
\end{equation}
Since the solution $f \in L^\infty(\intervalleff{0}{T},L^\infty(\R^3 \times \R^3))$, we finally need to show 
\begin{equation}
\forall k \geq 1, \underset{t \in \intervalleff{0}{T}}{\sup}M_k(t)^\frac{3}{k+3} \leq Ck.
\end{equation}
First, we recall that thanks to \eqref{ineq:propuni} where $m>6$ we can infer that $\rho \in L^\infty(\intervalleff{0}{T},L^p(\R^3))$ with $p=\frac{m+3}{3}>3$ and following \eqref{ineq:E2} we have $E \in L^\infty(\intervalleff{0}{T},L^\infty(\R^3))$.
Then we write
\begin{equation*}
\frac{d}{dt} \abs{V(t,x,v)}^k = k \abs{V(t,x,v)}^{k-1} \frac{\dot{V}(t,x,v) \cdot V(t,x,v)}{\abs{V(t,x,v)}},
\end{equation*}
and thanks to the bound on $E$ and the definition of the characteristics \eqref{def:chara} we can infer that for all $k>m$
\begin{align*}
\abs{V(t,x,v)}^k & \leq \abs{v}^k+k\int_{0}^{t} \abs{V(s,x,v)}^{k-1} \frac{\left(E(s,X(s,x,v))+V(s,x,v) \wedge B(s,X(s,x,v))\right) \cdot V(s,x,v)}{\abs{V(s,x,v)}} ds\\
& \leq \abs{v}^k+k\norme{E}_\infty \int_{0}^{t} \abs{V(s,x,v)}^{k-1}ds.
\end{align*}
Since the contribution of magnetic field $B$ vanishes, the following computations are the same as in the unmagnetized case \cite{M16}. In an effort to be clear, we explicit these computations nonetheless.

Integrating this last inequality with respect to $f^{in}(x,v)dxdv$ we get
\begin{equation}
M_k(t) \leq M_k(0)+k\norme{E}_\infty \int_{0}^{t} M_{k-1}(s)ds.
\end{equation}
Thus by induction we deduce that $\underset{t \in \intervalleff{0}{T}}{\sup} M_k(t)$ is finite for all $k>m$. furthermore, by another classical velocity moment inequality we obtain that
\begin{equation}
M_{k-1}(s) \leq \norme{f(s)}_1^\frac{1}{k} M_k(s)^\frac{k-1}{k}.
\end{equation}
Since $\norme{f(t)}_1$ is conserved, we get
\begin{equation}
M_k(t) \leq M_k(0)+Ck\int_{0}^{t}  M_k(s)^\frac{k-1}{k}ds,.
\end{equation}
Differentiating this inequality allows us to write
\begin{equation*}
M_k'(t) \leq C k M_k(t)^\frac{k-1}{k} \Leftrightarrow \frac{d}{dt}(M_k(t)^\frac{1}{k}) \leq C \Rightarrow \underset{t \in \intervalleff{0}{T}}{\sup} M_k(t)^\frac{1}{k} \leq M_k(0)^\frac{1}{k}+C.
\end{equation*}
By assumption on $M_k(0)$ we find for all $t \in \intervalleff{0}{T}$
\begin{equation}
M_k(t)^\frac{1}{k} \leq (C_0 k)^\frac{1}{3}+C \leq (Ck)^\frac{1}{3} \leq (Ck)^{\frac{1}{3}+\frac{1}{k}},
\end{equation}
which finally implies that
\begin{equation}
\underset{t \in \intervalleff{0}{T}}{\sup} M_k(t)^\frac{3}{k+3} \leq Ck.
\end{equation}

\subsection{Estimate on the characteristics}\label{sec:uniMiot2}
We consider two solutions $f_1, f_2 \in L^\infty(\intervalleff{0}{T},L^1\cap L^\infty(\R^3 \times \R^3))$ such that $\rho_1, \rho_2$ verify
\begin{equation}\label{regurho}
\rho_1, \rho_2 \in L^\infty(\intervalleff{0}{T},L^p(\R^3))
\end{equation}
for some $p > 3$. This regularity on $\rho_{1,2}$ is guaranteed by the condition \eqref{ineq:unimoment} thanks to the estimate \eqref{ineq:rhoMk}. Then we write $Y_1=(X_1,V_1)$ and $Y_2=(X_2,V_2)$ for the corresponding characteristics, which are both solutions to \eqref{def:chara} with $t=0$. This means we can simplify the notation and will write $Y_{i}(t;0,x,v)=Y_{i}(t,x,v)$, $i=1,2$. Regarding the existence of such characteristics, the condition \eqref{regurho} yields sufficient regularity on the electric field $E_{i}$, $i=1,2$, so that with the added regularity assumption on the magnetic field \eqref{reguniB} we can define weak characteristics thanks to theorem III.2 (section III.2) in \cite{DL89}.

Now we introduce the distance
\begin{equation}\label{def:D}
D(t)=\iint_{\R^3 \times \R^3} \abs{X_1(t,x,v)-X_2(t,x,v)}f^{in}(x,v)dxdv.
\end{equation}
From \eqref{def:chara} we can write that
\begin{equation}
\begin{aligned}
&X_1(t,x,v)-X_2(t,x,v)=\int_{0}^{t} \int_{0}^{s}E_1(\tau,X_1(\tau,x,v))-E_2(\tau,X_2(\tau,x,v))\\
& +V_1(\tau,x,v) \wedge B(\tau,X_1(\tau,x,v))-V_2(\tau,x,v) \wedge B(\tau,X_2(\tau,x,v))d\tau ds
\end{aligned}
\end{equation}
which yields that 
\begin{align*}
D(t)  &\leq \int_{0}^{t} \int_{0}^{s} \int_{\R^6}\abs{E_1(\tau,X_1(\tau,x,v))-E_2(\tau,X_2(\tau,x,v))}\\
& +\abs{V_1(\tau,x,v) \wedge B(\tau,X_1(\tau,x,v))-V_2(\tau,x,v) \wedge B(\tau,X_2(\tau,x,v))}f^{in}(x,v)dxdvd\tau ds\\
&\leq \int_{0}^{t}\int_{0}^{s}\int_{\R^6}\abs{E_1(\tau,X_1(\tau,x,v))-E_2(\tau,X_2(\tau,x,v))}f^{in}(x,v) dxdvd\tau ds\\ &+\norme{B}_\infty \int_{0}^{t}\int_{0}^{s} \int_{\R^6} \abs{V_1(\tau,x,v)-V_2(\tau,x,v)} f^{in}(x,v)dxdvd\tau ds\\
&+\int_{0}^{t}\int_{0}^{s} \int_{\R^6} \abs{V_2(\tau,x,v)}\abs{B(\tau,X_1(\tau,x,v))-B(\tau,X_2(\tau,x,v))}f^{in}(x,v) dxdvd\tau ds\\
& = I(t)+J(t)+K(t)
\end{align*}
The term $I(t)$ is the quantity estimated thanks to the method in \cite{M16}. As for the other two terms $J(t)$ and $K(t)$, since we want to use the same method as in \cite{M16} which is to exploit the fact that the characteristics of the Vlasov equation verify an ODE of order 2, we need them to be controlled by $\int_{0}^{t}\int_{0}^{s} D(\tau)^{1-\frac{3}{p}}d\tau ds$. This is true and the estimates are given in the following proposition.
\begin{proposition}\label{prop:estiuni}
	For all $t \in \intervalleff{0}{T}$ and for all $p>3$, we have the following estimates:
	\begin{equation}\label{ineq:I}
	I(t) \leq CpC_{\rho_1,\rho_2}\int_{0}^{t}\int_{0}^{s} D(\tau)^{1-\frac{3}{p}}d\tau ds
	\end{equation}
	\begin{equation}\label{ineq:J}
	K(t) \leq (CpK_B+K_{B,p})\int_{0}^{t}\int_{0}^{s} D(\tau)^{1-\frac{3}{p}}d\tau ds
	\end{equation}
	\begin{equation}\label{ineq:K}
	\begin{aligned}
	&J(t) \leq \norme{B}_\infty \int_{0}^{t}\int_{0}^{s} \int_{0}^{\tau}\left(CpC_{\rho_1,\rho_2}+(CpK_B+K_{B,p})\right)D(u)^{1-\frac{3}{p}} du d\tau ds\\
	& +\norme{B}_\infty^2 \exp(T\norme{B}_\infty)\int_{0}^{t}\int_{0}^{s} \int_{0}^{\tau} \int_{0}^{u}\left(CpC_{\rho_1,\rho_2}+(CpK_B+K_{B,p})\right)D(w)^{1-\frac{3}{p}}dw du d\tau ds.
	\end{aligned}
	\end{equation}
	with
	\begin{align*}
	C_{\rho_1,\rho_2} & =\max\left(1+\norme{\rho_1}_{L^\infty(\intervalleff{0}{T},L^p)},1+\norme{\rho_2}_{L^\infty(\intervalleff{0}{T},L^p)} \right),\\
	K_{B,p} & =2 \norme{B}_{\infty}\norme{E_2}_\infty\exp(T\norme{B}_\infty),\\
	K_B & = 2 \norme{B}_{\infty}\exp(T\norme{B}_\infty),
	\end{align*}  
	and where $C$ denotes a constant that depends only on $T,\norme{f^{in}}_\infty, \norme{f^{in}}_1$.
\end{proposition}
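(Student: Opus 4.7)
The three estimates are proved in order: $I(t)$ by Miot's method from \cite{M16}, $K(t)$ by a Lipschitz-$B$ argument that crucially exploits the moment hypothesis \eqref{ineq:unimoment}, and $J(t)$ by integrating the $V$-equation once more in time and reducing to the previous two. For $I(t)$, split
\begin{equation*}
\abs{E_1(\tau,X_1) - E_2(\tau,X_2)} \leq \abs{E_1(\tau,X_1) - E_1(\tau,X_2)} + \abs{E_1(\tau,X_2) - E_2(\tau,X_2)}.
\end{equation*}
The first piece is handled by the H\"older regularity of $E_1 = -\nabla\mathcal{G}_3 \ast \rho_1$, with constant $Cp\,\norme{\rho_1}_p$ and exponent $1-3/p$ (valid since $\rho_1 \in L^\infty_tL^p_x$ for some $p>3$ by \eqref{ineq:rhoMk}), combined with Jensen's inequality applied to the concave map $s\mapsto s^{1-3/p}$, which turns $\int\abs{X_1-X_2}^{1-3/p}f^{in}\,dx\,dv$ into $\norme{f^{in}}_1^{3/p} D(\tau)^{1-3/p}$. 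For the second piece, push $f^{in}\,dx\,dv$ forward along the measure-preserving characteristic flow $Y_2(\tau,\cdot)$ to reduce to $\int\abs{E_1-E_2}(\tau,y)\rho_2(\tau,y)\,dy$, and apply Miot's stability estimate for Poisson potentials to dominate this quantity by $Cp\,C_{\rho_1,\rho_2}D(\tau)^{1-3/p}$. Summation and two time-integrations give \eqref{ineq:I}.

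\textbf{The magnetic-Lipschitz term $K(t)$.} Use \eqref{reguniB} to replace $\abs{B(\tau,X_1)-B(\tau,X_2)}$ by $\norme{\nabla B}_\infty\abs{X_1-X_2}$, and Gr\"onwall on \eqref{def:chara} to obtain the pointwise bound
\begin{equation*}
\abs{V_2(\tau,x,v)} \leq (\abs{v}+\norme{E_2}_\infty T)\exp(T\norme{B}_\infty).
\end{equation*}
The $\norme{E_2}_\infty T$-part of this bound produces the $K_{B,p}$ coefficient after a direct application of Jensen on $\abs{X_1-X_2}^{1-3/p}$. For the $\abs{v}$-part, write $\abs{X_1-X_2} = \abs{X_1-X_2}^{1-3/p}\abs{X_1-X_2}^{3/p}$ and apply H\"older with exponents $p/3$ and $p/(p-3)$: the factor $\abs{v}^{p/3}$ is controlled by \eqref{ineq:unimoment}, the factor $\abs{X_1-X_2}^{1-3/p}$ yields $D(\tau)^{1-3/p}$ by Jensen, and the factor $\abs{X_1-X_2}^{3/p}$ is absorbed thanks to the pointwise bound $\abs{X_1-X_2} \leq C(1+\abs{v})$ coming from Gr\"onwall applied to the $X$-equation.

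\textbf{The velocity-difference term $J(t)$, and the main obstacle.} Integrating the $V$-equation and using the identity $V_1\wedge B(X_1) - V_2\wedge B(X_2) = (V_1-V_2)\wedge B(X_1) + V_2\wedge[B(X_1)-B(X_2)]$, the first summand contributes the integrand of $I$, the third contributes the integrand of $K$, and the middle summand $(V_1-V_2)\wedge B(X_1)$ is bounded in norm by $\norme{B}_\infty\int_0^\tau\abs{V_1-V_2}\,du$ and absorbed by a Gr\"onwall argument, producing the $\exp(T\norme{B}_\infty)$ factor in front of the quadruple-integral term in \eqref{ineq:K}. Substituting \eqref{ineq:I} and \eqref{ineq:J} into the resulting inequality and performing the outer $s$-integration gives the triple and quadruple nested integrals of $D^{1-3/p}$. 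The main obstacle throughout is the precise matching of the $p$-scaling in $K(t)$: the hypothesis \eqref{ineq:unimoment} is calibrated exactly so that $\bigl(\int\abs{v}^{p/3}f^{in}\bigr)^{3/p}\sim p$, which is the same scaling Miot exploits in the unmagnetized case and is what ultimately produces the coefficient $Cp\,K_B$ in \eqref{ineq:J}; without uniform control of every moment of $f^{in}$, the velocity factor from the Lorentz force cannot be absorbed into the Lipschitz term and the argument breaks down.
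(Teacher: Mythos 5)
Your proposal is correct and follows the same overall architecture as the paper's proof: defer the $I(t)$ estimate to Miot's method, bound $\abs{V_2}$ by $(\abs{v}+T\norme{E_2}_\infty)e^{T\norme{B}_\infty}$ via Gr\"onwall, split $K(t)$ into the $\norme{E_2}_\infty$-part (Jensen on $x\mapsto x^{1-3/p}$, giving $K_{B,p}$) and the $\abs{v}$-part (H\"older plus the moment hypothesis \eqref{ineq:unimoment}, giving $CpK_B$), and obtain $J(t)$ from a Gr\"onwall inequality on $\abs{V_1-V_2}$ whose source terms are exactly the integrands of $I$ and $K$. The one place you genuinely diverge is the $\abs{v}$-part of $K(t)$: the paper does \emph{not} use the Lipschitz bound $\norme{\nabla B}_\infty\abs{X_1-X_2}$, but instead interpolates $B$ between $L^\infty$ and $W^{1,\infty}$ to get $\abs{B(\tau,X_1)-B(\tau,X_2)}\leq 2\norme{B}_\infty\abs{X_1-X_2}^{1-\frac{3}{p}}$ directly, and then applies H\"older with exponents $(p,p')$ to $\abs{v}(f^{in})^{1/p}$ and $\abs{X_1-X_2}^{1-3/p}(f^{in})^{1/p'}$, using $\bigl(\iint\abs{v}^pf^{in}\bigr)^{1/p}\leq(C_0p)^{1/3}\leq Cp$. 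Your variant keeps the full power $\abs{X_1-X_2}$ and redistributes the excess $\abs{X_1-X_2}^{3/p}$ into the velocity-weighted H\"older factor via the a priori bound $\abs{X_1-X_2}\leq C(1+\abs{v})$; this works and only needs a lower-order moment, but it imports an extra dependence on $\norme{E_2}_\infty$ into the coefficient of the $CpK_B$ term (harmless here, since $\norme{E_2}_\infty\lesssim C_{\rho_1,\rho_2}$ by \eqref{ineq:E2}, but it is why the paper's route keeps the constants in the cleaner form stated). Two cosmetic imprecisions: the second H\"older factor $\bigl(\iint\abs{X_1-X_2}f^{in}\bigr)^{1-3/p}$ equals $D(\tau)^{1-3/p}$ exactly, so no Jensen is needed there; and the scaling $\bigl(\iint\abs{v}^{p/3}f^{in}\bigr)^{3/p}$ is of order $p^{1/3}$, not $p$ --- the point of \eqref{ineq:unimoment} is only that this quantity grows at most like $Cp$, matching the $p\norme{\rho}_p$ growth from the field estimate.
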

%\textbf{Proof of \cref{prop:estiuni}:}
\begin{proof}[Proof of \cref{prop:estiuni}]
	As said above, the term $I(t)$ is the quantity estimated thanks to the method in \cite{M16}, so we treat it identically to find the estimate \eqref{ineq:I}.
	
	Let's first look at the term $K(t)$. Since $B \in L^\infty\left(\intervalleff{0}{T}, W^{1,\infty}(\R^3)\right)$ then for all $t \in \intervalleff{0}{T}$ and $\alpha \in \intervalleof{0}{1}$ 
	\begin{equation}\label{BinHolder}
	B(t) \in C^{0,\alpha}(\R^3)
	\end{equation}
	with H\"older coefficient $C_{B(t)}$ verifying $C_{B(t)} \leq \max(2\norme{B}_\infty,\norme{\nabla B}_\infty) \leq 2 \norme{B}_{\infty}$.
	
	Then we simply have for all $p>3$
	\begin{equation}
	K(t) \leq 2 \norme{B}_\infty\int_{0}^{t}\int_{0}^{s} \int_{\R^6} \abs{V_2(\tau,x,v)}\abs{X_1(\tau,x,v)-X_2(\tau,x,v)}^{1-\frac{3}{p}}f^{in}(x,v) dxdvd\tau ds
	\end{equation}
	Now we need to estimate the velocity characteristic $V_2$, and using \eqref{def:chara} we can write once again
	\begin{align*}
	\abs{V(t,x,v)} & \leq \abs{v}+\int_{0}^{t}\abs{E(s,X(s,x,v))}ds+\norme{B}_\infty\int_{0}^{t}\abs{V(s,x,v)}ds\\
	& \leq \abs{v}+T\norme{E}_\infty+\norme{B}_\infty\int_{0}^{t}\abs{V(s,x,v)}ds.
	\end{align*} 
	%and thanks to the Cauchy problem satisfied by the characteristics \eqref{def:chara}, we get
	%\begin{equation}
	%\frac{d}{dt} \abs{V(t)} \leq \abs{\dot{V}(t)} \leq %\norme{E}_\infty+\abs{V(t)}\norme{B}_\infty.
	%\end{equation}
	This classical Gr\"onwall inequality yields for all $t \in \intervalleff{0}{T}$
	\begin{equation}\label{ineq:V(t)}
	\abs{V(t,x,v)} \leq (\abs{v}+T\norme{E}_\infty)\exp(t\norme{B}_\infty).
	\end{equation}
	So that we can write
	\begin{equation}
	\begin{aligned}
	K(t) & \leq 2 \norme{B}_{\infty}\exp(T\norme{B}_\infty),\\
	&\int_{0}^{t}\int_{0}^{s} \int_{\R^6} (\abs{v}+T\norme{E_2}_\infty)\abs{X_1(\tau,x,v)-X_2(\tau,x,v)}^{1-\frac{3}{p}} f^{in}(x,v) dxdvd\tau ds,\\
	& = K_1(t)+K_2(t).
	\end{aligned}
	\end{equation}
	By applying Jensen's inequality for concave functions to $x \mapsto x^{1-\frac{3}{p}}$ we obtain
	\begin{equation}\label{ineq:K2}
	K_2(t) \leq K_{B,p}\int_{0}^{t}\int_{0}^{s} D(\tau)^{1-\frac{3}{p}}d\tau ds.
	\end{equation}
	Then we estimate $K_1(t)$ by writing $f^{in}=(f^{in})^{\frac{1}{p}}(f^{in})^{\frac{1}{p'}}$ where $\frac{1}{p}+\frac{1}{p'}=1$, so that with the H\"older inequality applied to $\abs{v}(f^{in})^{\frac{1}{p}}$ and $\abs{X_1(\tau,x,v)-X_2(\tau,x,v)}^{1-\frac{3}{p}}(f^{in})^{\frac{1}{p'}}$ with the exponents $p$ and $p'$ we have
	\begin{align*}
	K_1(t) \leq K_B \left(\int_{\R^6}\abs{v}^p f^{in}(x,v) dxdv\right)^\frac{1}{p}\int_{0}^{t}\int_{0}^{s} \left(\int_{\R^6}\abs{X_1(\tau,x,v)-X_2(\tau,x,v)}^{(1-\frac{3}{p})p'}f^{in}(x,v)dxdv\right)^\frac{1}{p'}d\tau ds.
	\end{align*}
	Using \eqref{ineq:unimoment} we have $\left(\int_{\R^6}\abs{v}^p f^{in}(x,v) dxdv\right)^\frac{1}{p} \leq (C_0 p)^\frac{1}{3} \leq Cp$. Furthermore, we can once again use the Jensen inequality because $(1-\frac{3}{p})p' =\frac{p-3}{p}\frac{p}{p-1}=\frac{p-3}{p-1}<1$, which gives us
	\begin{equation}\label{ineq:K1}
	K_1(t) \leq C pK_B \int_{0}^{t}\int_{0}^{s} D(\tau)^{1-\frac{3}{p}}d\tau ds. 
	\end{equation}
	This concludes the proof of \eqref{ineq:K}.

	To estimate the last term $J(t)$, we also use \eqref{def:chara} to obtain a Gr\"onwall inequality on $\abs{V_1(t)-V_2(t)}$, and since the computations are complicated we write $V_{1,2}(s),X_{1,2}(s)$ for the characteristics. First we write
	\begin{align*}
	\abs{V_1(t)-V_2(t)} &\leq \int_{0}^{t} \abs{E_1(s,X_1(s))-E_2(s,X_2(s))}ds+\norme{B}_\infty \int_{0}^{t} \abs{V_1(s)-V_2(s)}ds\\
	&+\int_{0}^{t} \abs{V_2(s)}\abs{B(s,X_1(s))-B(s,X_2(s))}ds.
	\end{align*}
	Now using \eqref{BinHolder} and \eqref{ineq:V(t)} we deduce
	\begin{align*}
	\abs{V_1(t)-V_2(t)} &\leq \int_{0}^{t} \abs{E_1(s,X_1(s))-E_2(s,X_2(s))}ds+\norme{B}_\infty \int_{0}^{t} \abs{V_1(s)-V_2(s)}ds\\
	&+(K_B \abs{v}+K_{B,p})\int_{0}^{t}\abs{X_1(s)-X_2(s)}^{1-\frac{3}{p}}ds
	\end{align*}
	which is just the Gr\"onwall inequality on $\abs{V_1(t)-V_2(t)}$ we were looking for and which yields
	\begin{align*}
	&\abs{V_1(t)-V_2(t)}  \leq \int_{0}^{t} \abs{E_1(s,X_1(s))-E_2(s,X_2(s))}+(K_B \abs{v}+K_{B,p})\abs{X_1(s)-X_2(s)}^{1-\frac{3}{p}}ds\\
	&+\int_{0}^{t} \left(\int_{0}^{s} \abs{E_1(\tau,X_1(\tau))-E_2(\tau,X_2(\tau))}+(K_B \abs{v}+K_{B,p})\abs{X_1(\tau)-X_2(\tau)}^{1-\frac{3}{p}}
	d\tau\right)\\
	& \times \norme{B}_\infty \exp((t-s)\norme{B}_\infty)ds.	
	\end{align*}
	
	Now we insert this inequality in the definition of $J(t)$ to obtain
	\begin{equation}
	\begin{aligned}
	& J(t) \leq \norme{B}_\infty \int_{0}^{t}\int_{0}^{s} \int_{0}^{\tau} \int_{\R^6} \left(\abs{E_1(u,X_1(u))-E_2(u,X_2(u))}+(K_B \abs{v}+K_{B,p})\abs{X_1(u)-X_2(u)}^{1-\frac{3}{p}}\right)\\
	& \times f^{in}(x,v)dx dv du d\tau ds\\
	&+\norme{B}_\infty^2 \exp(T\norme{B}_\infty) \times\\
	& \int_{0}^{t}\int_{0}^{s} \int_{0}^{\tau} \int_{0}^{u} \int_{\R^6} \left(\abs{E_1(w,X_1(w))-E_2(w,X_2(w))}+(K_B \abs{v}+K_{B,p})\abs{X_1(w)-X_2(w)}^{1-\frac{3}{p}}\right)\\& \times f^{in}(x,v)dx dv dw du d\tau ds.
	\end{aligned}
	\end{equation}
	Like previously, we can use the Jensen inequality to bound the terms $K_{B,p}\abs{X_1-X_2}^{1-\frac{3}{p}}$, the relation \eqref{ineq:I} to bound the terms $\abs{E_1-E_2}$ and the H\"older inequality used to estimate $K_1(t)$ to bound $K_B \abs{v}\abs{X_1-X_2}^{1-\frac{3}{p}}$.
	
	This gives the desired estimate \eqref{ineq:J} on $J(t)$:
	\begin{equation*}
	\begin{aligned}
	&J(t) \leq \norme{B}_\infty \int_{0}^{t}\int_{0}^{s} \int_{0}^{\tau}\left(CpC_{\rho_1,\rho_2}+(C p K_B+K_{B,p})\right)D(u)^{1-\frac{3}{p}} du d\tau ds\\
	& +\norme{B}_\infty^2 \exp(T\norme{B}_\infty)\int_{0}^{t}\int_{0}^{s} \int_{0}^{\tau} \int_{0}^{u}\left(CpC_{\rho_1,\rho_2}+(C p K_B+K_{B,p})\right)D(w)^{1-\frac{3}{p}}dw du d\tau ds.
	\end{aligned}
	\end{equation*}
\end{proof}
\subsection{A second order inequality on $D(t)$}\label{sec:uniMiot3}
%\textit{- Second step: a second order inequality on $D(t)$}

We begin by looking at the dependence of $K_{B,p}$ with respect to $p$. The only term in $K_{B,p}$ which depends on $p$ is $\norme{E_2}_\infty$, and since $\rho_2 \in L^\infty(\intervalleff{0}{T},L^p(\R^3))$ with $p>3$, then we can deduce the desired $L^\infty$ bound on $E_2$ because for all $t \in \intervalleff{0}{T}$
\begin{equation}\label{ineq:Erho}
\norme{E_2(t)}_\infty \leq \norme{\mathbf{1}_{\abs{x}\geq 1} \nabla \mathcal{G}_3}_\infty \norme{\rho_2(t)}_1 + \norme{\mathbf{1}_{\abs{x}< 1} \nabla \mathcal{G}_3}_q \norme{\rho_2(t)}_p
\end{equation}
with $\frac{1}{p}+\frac{1}{q}=1$.

From this last inequality we can finally deduce 
\begin{equation}\label{ineq:E2}
\norme{E_2}_\infty \leq C\left(1+\norme{\rho_2}_{L^\infty(\intervalleff{0}{T},L^p)}\right),
\end{equation}
where $C$ depends only on $\norme{f^{in}}_1$.

Now we consider that the solutions $f_1,f_2$ verify the assumptions of \cref{theo:uni}. This means that $\max(\norme{\rho_1}_{L^\infty(\intervalleff{0}{T},L^p)},\norme{\rho_2}_{L^\infty(\intervalleff{0}{T},L^p)}) \leq Cp$ for all $p \geq 1$, and so thanks to \eqref{ineq:E2} and  \cref{prop:estiuni} we have for all $p>3$
\begin{equation}
\begin{aligned}
& D(t) \leq C_1p^2\int_{0}^{t}\int_{0}^{s} D(\tau)^{1-\frac{3}{p}}d\tau ds\\
& +C_2(p^2+p)\int_{0}^{t}\int_{0}^{s} \int_{0}^{\tau} D(u)^{1-\frac{3}{p}}du d\tau ds \\
& +C_3(p^2+p)\int_{0}^{t}\int_{0}^{s} \int_{0}^{\tau} \int_{0}^{u} D(w)^{1-\frac{3}{p}}dw du d\tau ds,
\end{aligned}
\end{equation}
where $C_1,C_2,C_3$ are constants that depend on $T,\norme{f^{in}}_\infty, \norme{f^{in}}_1,\norme{B}_\infty, \norme{\nabla B}_\infty$.

Let $\mathcal{F}(t) = \int_{0}^{t}\int_{0}^{s} D(\tau)^{1-\frac{3}{p}}d\tau ds$. Since $\mathcal{F}$ is increasing by construction and since $p>3$ we can finally conclude that

\begin{equation}
D(t) \leq Cp^2\int_{0}^{t}\int_{0}^{s} D(\tau)^{1-\frac{3}{p}}d\tau ds,
\end{equation}
with $C$ that depends on $T,\norme{f^{in}}_\infty, \norme{f^{in}}_1,\norme{B}_\infty, \norme{\nabla B}_\infty$.

Finally, we obtain the same second order differential inequality as in \cite{M16}, for all $t\in \intervalleff{0}{T}$ we have:
\begin{equation}
\mathcal{F}''(t) \leq Cp^2 \mathcal{F}(t).
\end{equation}

From this inequality, we use the same method as in \cite{M16} to conclude that for all $t\in \intervalleff{0}{T}$ we have $f_1(t)=f_2(t)$ a.e. on $\R^3 \times \R^3$. This concludes the proof of \cref{theo:uni}.

\subsection{Proof of \cref{theo:uniloeper}}\label{sec:uniloeper}
%In this subsection, we shall denote by $C$ a constant that can change from one line to another but that only depends on
%\begin{equation}\label{conserveduniloeper}
%\mathcal{E}(0), \norme{f^{in}}_1, \norme{f^{in}}_\infty, T, \iint \abs{v}^m f^{in}
%\end{equation}

We finish this section with the proof of \cref{theo:uniloeper}, which is the extension of Loeper's result \cite{L06} to the magnetized Vlasov--Poisson system. %As said above, this proof was already done for $B$ constant in \cite{R21}. In fact in \cite{R21}, it was already proved that under the assumptions \eqref{eq:boundrho}, \eqref{eq:boundrho1}, and \eqref{eq:boundrho2}, the charge density $\rho$ is bounded, and this proof doesn't change in the case of a general magnetic field.

Like in the \cref{theo:uni}, we require additional assumptions on the moments of $f^{in}$ to obtain uniqueness (compared to the unmagnetized case). However, these assumptions on the moments aren't as strong as in \cref{theo:uni} because the boundedness of $\rho$ is already a strong assumption.

To prove our theorem, we only need to adapt subsection 3.2 from \cite{L06}. Thus we consider two solutions of \eqref{sys:VPwB} $f_1, f_2$ with initial datum $f^{in}$ that verifies the assumptions of \cref{theo:uniloeper}. Like in the previous proof, we write the corresponding densities, electric fields, and characteristics $\rho_1,\rho_2$, $E_1,E_2$, and $Y_1(t,x,v), Y_2(t,x,v)=(X_1(t,x,v),V_1(t,x,v)),(X_2(t,x,v),V_2(t,x,v))$. To simplify the presentation, we will write $Y_i(t)$ for the characteristics. We define the following quantity $Q$:
\begin{equation}
Q(t)=\frac{1}{2} \int_{\R^6} f^{in}(x,v)\abs{Y_1(t,x,v)-Y_2(t,x,v)}^2 dxdv.
\end{equation}
Now we differentiate $Q$ (which we couldn't do with the distance $D$ \eqref{def:D}) splitting the magnetic part of the Lorentz force $V \wedge B$ like in the previous section:
\begin{align*}
\dot{Q}(t) & =\int_{\R^6} f^{in}(x,v)(Y_1(t)-Y_2(t)) \cdot \partial_t(Y_1(t)-Y_2(t))dxdv,\\
& = \int_{\R^6} f^{in}(x,v)(X_1(t)-X_2(t)) \cdot  (V_1(t)-V_2(t))dxdv\\
& + \int_{\R^6} f^{in}(x,v)(V_1(t)-V_2(t)) \cdot (E_1(t,X_1(t))-E_2(t,X_2(t)) )dxdv\\
& + \int_{\R^6} f^{in}(x,v)(V_1(t)-V_2(t)) \cdot  \left[V_2(t) \wedge (B_1(t,X_1(t))-B_2(t,X_2(t))\right]dxdv\\
& + \int_{\R^6} f^{in}(x,v)(V_1(t)-V_2(t)) \cdot \left[(V_1(t)-V_2(t)) \wedge B(t,X_1(t))\right] dxdv.
\end{align*}
First, we notice that the last term is null, which means we only need to control the second to last term (due to the added magnetic field) which we denote $P(t)$. The first term is bounded by $Q(t)$ and the second term can be estimated using the analysis from \cite{L06} and is bounded by $Q(t)\ln(\frac{1}{Q(t)})$. To control $P(t)$ we first use the bound on the velocity characteristic \eqref{ineq:V(t)}. %and then the Cauchy--Schwarz inequality.
\begin{align*}
P(t) &\leq \norme{B}_{W^{1,\infty}}\int_{\R^6} f^{in}(x,v)\abs{V_1(t)-V_2(t)}  \abs{V_2(t)} \abs{X_1(t)-X_2(t)}dxdv,\\
&\leq \norme{B}_{W^{1,\infty}}\int_{\R^6} f^{in}(x,v)\abs{V_1(t)-V_2(t)}  (\abs{v}+T\norme{E_2}_\infty)e^{T\norme{ B}_\infty} \abs{X_1(t)-X_2(t)}dxdv,\\
&= R(t)+S(t).
\end{align*}
We recall that since $\norme{\rho_{1,2}}_\infty \leq +\infty$ we can bound $\norme{E_{1,2}}_\infty$ thanks to \eqref{ineq:E2} and the interpolation inequality:
\begin{equation}
\norme{E_{i}}_\infty \leq C(\norme{\rho}_1,\norme{\rho}_\infty):=C_{\rho},
\end{equation}
with $i=1,2$.

This means we can simply estimate $S(t)$ with the Cauchy--Schwarz inequality applied on the functions $(f^{in})^\frac{1}{2}\abs{V_1(t)-V_2(t)}$ and $(f^{in})^\frac{1}{2}\abs{X_1(t)-X_2(t)}$. 
\begin{align*}
S(t)& \leq T C_\rho C_{B,T}\left(\int_{\R^6} f^{in}(x,v)\abs{V_1(t)-V_2(t)}^2\right)^\frac{1}{2}  \left(\int_{\R^6} f^{in}(x,v)\abs{X_1(t)-X_2(t)}^2\right)^\frac{1}{2}\\
&\leq T C_\rho C_{B,T} Q(t),
\end{align*}
with $C_{B,T}=\norme{B}_{W^{1,\infty}}e^{T\norme{ B}_\infty}$.

To control $R(t)$ we first use the Cauchy--Schwarz inequality and then the bound on the velocity characteristic \eqref{ineq:V(t)}, which also gives us a bound on the position characteristic. %with the functions $(f^{in})^\frac{1}{2}\abs{Y_1(t)-Y_2(t)}$ and $(f^{in})^\frac{1}{2}\abs{Y_1(t)-Y_2(t)}$
\begin{align*}
R(t) &\leq C_{B,T}\int_{\R^6} f^{in}(x,v)\abs{v}\abs{Y_1(t)-Y_2(t)}^2 dxdv\\
& \leq C_{B,T}\int_{\R^6} f^{in}(x,v)\abs{v}\abs{Y_1(t)-Y_2(t)} \left(\abs{V_1}^2+\abs{V_2}^2+\abs{X_1}^2+\abs{X_2}^2\right)^\frac{1}{2}dxdv\\
& \leq C_{B,T}Q(t)\int_{\R^6} f^{in}(x,v)\abs{v}^2 \left(\abs{V_1}^2+\abs{V_2}^2+\abs{X_1}^2+\abs{X_2}^2\right)dxdv\\
& \leq C_{B,T}Q(t)\underset{=I}{\underbrace{\int_{\R^6} f^{in}(x,v)\abs{v}^2 2\left((\abs{v}+TC_\rho)^2 e^{2T\norme{ B}_\infty}+\left(\abs{x}+T(\abs{v}+TC_\rho) e^{T\norme{ B}_\infty}\right)^2\right)dxdv}}.
\end{align*}
Thanks to the assumption \eqref{ineq:uniloepermomentini} of \cref{theo:uniloeper}, $I$ is bounded because we have 
\begin{equation}
I \leq C\left(\int f^{in}\abs{v}^6,\int f^{in}\abs{x}^4\right). 
\end{equation}

From these estimates, we conclude that
\begin{equation}\label{ineq:finloeper}
\frac{d}{dt} Q(t) \leq C Q(t)\left(1+\ln \frac{1}{Q(t)}\right)
\end{equation}
with $C:=C\left(T,\norme{B}_{W^{1,\infty}},\norme{\rho}_1,\norme{\rho}_\infty,\int f^{in}\abs{v}^6,\int f^{in}\abs{x}^4\right)$.

%The inequality \eqref{ineq:finloeper} is analogous to the inequality obtained in \cite{L06} in the unmagnetized case. 
With this inequality we can show, using a Gr\"onwall type inequality, that $Q(0)=0 \Rightarrow Q(t)=0$ for all $t \geq 0$, which concludes the proof of \cref{theo:uniloeper}.

\section*{Appendix}\label{sec:appen}
As said above, we present a slightly more detailed version of the proof of \cref{prop:Q2} compared to the one found in \cite{P12}.

%\textbf{Proof of \cref{prop:Q2}:}
\begin{proof}[Proof of \cref{prop:Q2}]
	Let $t \in \intervalleff{0}{T}$. We note here $H=1+M_{2+\varepsilon}(T)$ and for any $\delta \in \intervalleff{0}{t}$ we define $N_1(t,\delta)=\delta Q(t,\delta)^\frac{4}{3}$%\delta \left(Q(t,\delta)\exp(\delta \norme{B}_{\infty})\right)^\frac{4}{3}$
	and $N_2(t,\delta)=(\delta H)^\frac{1}{2}$ as in the left hand side of inequality \eqref{ineq:Q}. We set:
	\begin{equation}
	I=\left\{\delta \in \intervalleff{0}{t} : N_1(t,\delta) \geq N_2(t,\delta)\right\}.
	\end{equation}
	First let's suppose that $I$ is empty. Then $Q(t,\delta) \lesssim N_2(t,\delta)$ thanks to \eqref{ineq:Q} for any $\delta \in \intervalleff{0}{t}$, which means that
	\begin{equation}\label{ineq:Qsimple}
	Q(t,\delta) \lesssim (\delta H)^\frac{1}{2} \leq t^\frac{1}{2}(1+M_{2+\varepsilon}(T))^\frac{4}{7},
	\end{equation}
	so that \eqref{ineq:Q2} is automatically verified.
	Now we suppose that there exists $\delta_*(t) \in \intervalleof{0}{t}$ such that $N_1(t,\delta_*(t)) = N_2(t,\delta_*(t))$. It comes:
	\begin{equation}\label{ineq:delta*H}
	Q(t,\delta_*(t)) =  (\delta_*(t)^{-1}H)^\frac{3}{8}.
	\end{equation}
	Then we use the inequality \eqref{ineq:Q} again so $Q(t,\delta_*(t)) \lesssim N_1(t,\delta_*(t))+N_2(t,\delta_*(t))=2N_2(t,\delta_*(t)) \lesssim (\delta H)^\frac{1}{2}$, which implies that
	\begin{equation}\label{ineq:delta*}
	H^{-\frac{1}{7}} \lesssim  \delta_*(t)
	\end{equation}
	and again using \eqref{ineq:delta*H} we obtain
	\begin{align*}
	Q(t,\delta_*(t)) \lesssim H^\frac{3}{7}.
	\end{align*}
	Now let $c_*^{-1}$ be the implicit constant in \eqref{ineq:delta*}, which depends only on the constants in \eqref{conserved}, thanks to \eqref{ineq:delta*} we can write for any $t \in \intervalleff{c_*H^{-\frac{1}{7}}}{T}$
	\begin{equation}\label{ineq:QH17}
	Q(t,c_*H^{-\frac{1}{7}}) \lesssim H^\frac{3}{7}
	\end{equation}
	Then for any such $t$, we can write $t=nc_*H^{-\frac{1}{7}}+r$ with $n\in \NN^*$ and $r < c_*H^{-\frac{1}{7}}$ and thanks to the last inequality we obtain
	\begin{align*}
	Q(t,t) & \leq Q(r,r)+\sum_{p=1}^{n} Q(pc_*H^{-\frac{1}{7}}+r,c_*H^{-\frac{1}{7}})\\
	& \lesssim (rH)^\frac{1}{2}+n H^\frac{3}{7}\\
	& \lesssim c_*(rH)^\frac{1}{2}+nc_*H^{-\frac{1}{7}} H^\frac{4}{7}\\
	& \lesssim c_* t^\frac{1}{2}H^\frac{1}{2}+t H^\frac{4}{7}
	%& \lesssim (t^\frac{1}{2}+t)H^\frac{4}{7}
	\end{align*}
	So that finally for all $t \in \intervalleff{c_*H^{-\frac{1}{7}}}{T}$ we have
	\begin{equation}\label{ineq:Q21}
	Q(t,t) \lesssim (t^\frac{1}{2}+t)H^\frac{4}{7}.
	\end{equation}
	Lastly, if $t \leq c_*H^{-\frac{1}{7}}$ then thanks to \eqref{ineq:Qsimple} and \eqref{ineq:delta*} we can write
	\begin{equation}
	Q(t,t) \lesssim (tH)^\frac{1}{2}.
	\end{equation}
	This concludes the proof of \cref{prop:Q2} because $H>1$. 
\end{proof}
\textbf{Acknowledgments:} The author would like to thank Fr\'ed\'erique Charles, Bruno Despr\'es and Mikaela Iacobelli for all their very helpful comments related to this manuscript.
%\nocite{*}
\printbibliography

\end{document}